\def\rr{{\mathbb R}}
\def\rn{{{\rr}^n}}
\def\cj{{\mathcal J}}
\def\cl{{\mathcal L}}
\def\cp{{\mathscr P}}
\def\az{\alpha}
\def\lip{{\mathop\mathrm{Lip}}}
\def\bz{\beta}
\def\gz{{\gamma}}
\def\tz{\theta}
\def\sz{\sigma}
\def\wz{\widetilde}
\def\ls{\lesssim}
\def\gs{\gtrsim}
\def\r{\right}
\def\lf{\left}
\newtheorem{thm}{Theorem}[section]
\newtheorem{lem}[thm]{Lemma}
\newtheorem{prop}[thm]{Proposition}
\def\glip{{\mathop\mathrm{GLip}}}
\numberwithin{equation}{section}
\begin{document}

\arraycolsep=1pt

\title{\bf A characterization of the Gaussian Lipschitz space
and sharp estimates for the Ornstein-Uhlenbeck Poisson kernel
\footnotetext{\hspace{-0.22cm}2010
{\it Mathematics Subject Classification}. %
Primary 26A16; Secondary 28C20, 46E35. %
\endgraf {\it Key words and phrases}. %
Gauss measure space, Lipschitz space, Ornstein-Uhlenbeck Poisson kernel.%
\endgraf The first author is supported by
National Natural Science Foundation of China (Grant No. 11101425) and
the Alexander von Humboldt Foundation.
}}
\author{Liguang Liu and Peter Sj\"ogren}
\date{15 April, 2014}
\maketitle

\begin{center}
\begin{minipage}{13.5cm}\small
 \noindent{\bf Abstract.}
The Gaussian Lipschitz space was defined by Gatto and Urbina, by means
of the Ornstein-Uhlenbeck Poisson kernel.
We give a characterization of this space in terms of a combination of ordinary
Lipschitz continuity conditions.
The main tools used in the proof are sharp estimates of the
Ornstein-Uhlenbeck Poisson  kernel and some of its derivatives.
\end{minipage}
\end{center}

%\tableofcontents

%%%%%%%%%%%%%%%%%%%%%%%%%%%%%%%%%%%%%%%%%%%%%%%%%%%%%%%%%%%%

\section{Introduction and main results}

\qquad
Let $\gamma$ be the Gauss measure on $\rn$ with $n\ge1$,
that is, $d\gz(x)=\pi^{-n/2}e^{-|x|^2}\,dx$.
The Gaussian analogue of the Euclidean Laplacian is
the \emph{Ornstein-Uhlenbeck operator}
$\cl =-\frac12 \Delta +x\cdot\nabla ,$
where $\nabla:=  (\partial_{x_1}, \dots, \partial_{x_n}).$
The operator $\cl$ is the infinitesimal generator of
the \emph{Ornstein-Uhlenbeck semigroup} $T_t = e^{-t\cl},\;\;t>0$, given by
$$T_tf(x) =\pi^{-n/2}\int_\rn M_{e^{-t}}(x,y)f(y)\,dy$$
 for all $f\in L^2(\gz)$ and $x\in\rn$,
where $M_{e^{-t}}$ is the \emph{Mehler kernel} defined  by
$$M_r(x,y):=\frac{e^{-\frac{|y-rx|^2}{1-r^2}}}{(1-r^2)^{n/2}}
\qquad x,\,y\in\rn,\quad 0<r<1.$$
The \emph{Ornstein-Uhlenbeck Poisson semigroup} $\{P_t\}_{t>0}$  is defined by
subordination from $\{T_t\}_{t>0}$ as
\begin{equation*}
  P_tf(x)=\frac1{\sqrt \pi}\int_0^\infty \frac{e^{-u}}{\sqrt u}
  \,T_{t^2/(4u)}f(x)\,du.
\end{equation*}
There is a corresponding \emph{Ornstein-Uhlenbeck Poisson kernel} $P_t(x,y)$, for
which
$$
P_tf(x)= \int_\rn P_t(x, y) f(y)\,dy,
$$
and it is  obtained from the Mehler kernel
by similar subordination.
 Transforming variables
$s=t^2/(4u)$ and inserting the expression for the Mehler kernel $M_{e^{-s}}$,
one gets
\begin{equation}\label{poisson-ker}
P_t(x,y)= \frac1{2\pi^{(n+1)/2}} \int_0^\infty
\frac{t}{s^{3/2}} \,e^{-\frac{t^2}{4s}}\,
\frac{e^{-\frac{|y-e^{-s}x|^2}{1-e^{-2s}}}}{(1-e^{-2s})^{n/2}} \,ds.
\end{equation}

Gatto and Urbina \cite{GU} introduced
the Gaussian Lipschitz spaces; see also \cite{GPU} and \cite{PU}.
Let $\az\in(0,1)$, which will be fixed throughout the paper.
A function $f$ in $\rn$
is said to be in  the \emph{Gaussian Lipschitz space
$\glip_\az$} if it is
bounded and satisfies
\begin{equation}\label{GLip}
\|\partial_t P_tf\|_{L^\infty}\le A t^{\az-1}, \qquad t>0,
\end{equation}
for some $A>0$.
The norm in $f\in \glip_\az$ is
$$\|f\|_{\glip_\az}:= \|f\|_{L^\infty}+\inf\{A:\, A \,\,\textup{satisfies}\, \,
\eqref{GLip}\}.$$

The standard Euclidean Lipschitz space $\lip_\az(\rn)$
consists of all bounded functions $f$ such that for some $C>0$,
\begin{eqnarray}\label{standard}
|f(x)-f(y)|\le C |x-y|^\az, \qquad x,\,y\in\rn.
\end{eqnarray}
It is known that the space $\lip_\az(\rn)$ can be
characterized by means of the standard Poisson kernel
$$\cp_t(x,y)= c_n \frac{t}{(t^2+|x-y|^2)^{(n+1)/2}};$$
see Stein \cite[Section~V.\,4.\,2]{S1}.
To be precise, a bounded function $f$ belongs to $\lip_\az(\rn)$ if and only if
$$\|\partial_t\cp_t f\|_{L^\infty}\le C t^{\az-1}$$
for all $t>0$. The main aim of this paper is to describe the Gaussian
Lipschitz space by means of a condition like \eqref{standard}, as follows.

\begin{thm}\label{t1.1}
Let $\az\in(0,1)$. The following statements are
equivalent:
\begin{enumerate}
\item[\rm (i)] $f\in \rm{GLip}_\az$;
\item[\rm (ii)] there exists a positive constant $K$ such that
for all $x,y\in\rn$,
\begin{eqnarray}\label{strong-lip-2}
 |f(x)-f(y)|\le K \min\lf\{|x-y|^\az,\;\;
\lf( \frac{|x-y|}{1+|x|+|y|}\r)^{\frac\az 2}+
\lf((|x|+|y|) \sin\theta\r)^\alpha\,
\r\},
\end{eqnarray}
after correction of $f$ on a null set.
Here $\theta$ denotes the angle between the vectors $x$ and $y$;
if $x=0$ or $y=0$, then $\theta$ is understood as $0$.
 \end{enumerate}
Moreover, the norm $\|f\|_{\glip_\az}$ is equivalent to
 $
|f(0)|
 +\inf\{K>0:\, K \,\textup{satisfies}\,   \eqref{strong-lip-2}\}.
$

\end{thm}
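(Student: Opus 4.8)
The proof rests on sharp pointwise upper bounds for the Ornstein--Uhlenbeck Poisson kernel $P_t(x,y)$ and for its first-order derivatives $\partial_tP_t(x,y)$ and $\nabla_xP_t(x,y)$; deriving these from the subordination formula \eqref{poisson-ker} is the technical core. The plan is to split the $s$-integration in \eqref{poisson-ker}, and the parameter space, according to the relative sizes of $t$, $1+|x|+|y|$, $|x-y|$ and $\sin\theta$ --- exactly the quantities in \eqref{strong-lip-2} --- isolating a \emph{local} regime, on scales $\lesssim\min\{1,1/(1+|x|)\}$ about $x$, where $P_t$ behaves like the Euclidean Poisson kernel $\cp_t$, and a \emph{global} regime with Gaussian-type decay whose infinitesimal geometry is anisotropic: coarse in the radial direction (the halved exponent $\az/2$ in \eqref{strong-lip-2} reflecting the parabolic, Mehler-type scaling underneath the subordination) and governed by the transversal quantity $(|x|+|y|)\sin\theta$ in the angular directions. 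Combined with $P_t1=1$, whence $\int_\rn\partial_tP_t(x,y)\,dy=0$ and $\int_\rn\nabla_xP_t(x,y)\,dy=0$, these estimates make the two sides of the equivalence match term by term.

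For \textrm{(ii)}$\Rightarrow$\textrm{(i)}: one first checks that \eqref{strong-lip-2} forces $f$ to be bounded, with $\|f\|_{L^\infty}\lesssim|f(0)|+K$ (put $y=0$ and use the second entry of the minimum). Then, using the cancellation above,
$$\partial_tP_tf(x)=\int_\rn\partial_tP_t(x,y)\,\bigl(f(y)-f(x)\bigr)\,dy;$$
inserting the bound \eqref{strong-lip-2} for $|f(y)-f(x)|$ --- the term $|x-y|^\az$ controlling the contribution near the diagonal and the second expression the rest --- and integrating against the kernel estimate gives $\|\partial_tP_tf\|_{L^\infty}\lesssim K\,t^{\az-1}$, so $f\in\glip_\az$ with the asserted norm control.

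For \textrm{(i)}$\Rightarrow$\textrm{(ii)}, the harder direction, I would first reduce to $f$ smooth and bounded with bounded derivatives by replacing $f$ with $P_\vp f$, which satisfies $\|P_\vp f\|_{\glip_\az}\le\|f\|_{\glip_\az}$ and tends to $f$; this makes all the following manipulations legitimate. Integrating \eqref{GLip} in $t$ gives $\|f-P_tf\|_{L^\infty}\le\az^{-1}A\,t^\az$ for all $t>0$ (the integral converges since $\az-1>-1$). Next, gradient control on the extension $u(x,t):=P_tf(x)$, which solves $\partial_t^2u+\tfrac12\Delta_xu=x\cdot\nabla_xu$: writing $\nabla_xP_tf=-\int_t^\infty\partial_s\nabla_xP_sf\,ds$ (valid since $\nabla_xP_sf\to0$ as $s\to\infty$) and bounding $\|\partial_s\nabla_xP_sf\|_{L^\infty}$ by interior estimates for this equation in the local regime and by the kernel estimates for $\nabla_x\partial_sP_s$ in the global, large-$s$ regime yields region-dependent bounds for $\nabla_xP_tf$, of order $A\,t^{\az-1}$ in the local regime. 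Finally, for fixed $x,y$ one writes
$$f(x)-f(y)=\bigl(f(x)-P_tf(x)\bigr)+\bigl(P_tf(x)-P_tf(y)\bigr)+\bigl(P_tf(y)-f(y)\bigr)$$
and estimates the middle term by integrating $\nabla P_tf$ along a path joining $x$ to $y$: a line segment when $|x-y|$ lies below the local scale, and otherwise a circular arc on a sphere about the origin (producing the term $((|x|+|y|)\sin\theta)^\az$) followed by a radial segment (producing $(|x-y|/(1+|x|+|y|))^{\az/2}$). Taking $t\sim|x-y|$ in the first case, and in the second case $t$ adapted to $1+|x|+|y|$ and $\theta$ and optimizing, produces the two competing bounds in \eqref{strong-lip-2}; tracking the constants throughout gives $\|f\|_{\glip_\az}\sim|f(0)|+\inf K$.

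I expect the main obstacle to be twofold. First, proving the sharp kernel estimates --- in particular capturing the anisotropy responsible for the exponent $\az/2$ --- which needs a delicate region-by-region analysis of the finely balanced integral \eqref{poisson-ker}. Second, in \textrm{(i)}$\Rightarrow$\textrm{(ii)}, carrying the gradient estimate across the boundary between the local and global regimes and choosing $t$ optimally in the global regime: handling the critical scale $t\sim1/(1+|x|)$, where the character of the kernel estimates changes, is the delicate point.
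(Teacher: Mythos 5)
Your plan is correct and follows the same overall strategy as the paper: derive sharp pointwise bounds for $P_t(x,y)$ and its derivatives from the subordination formula, use the cancellation $\int\partial_tP_t(x,y)\,dy=0$ together with those bounds to get \textrm{(ii)}$\Rightarrow$\textrm{(i)}, and for \textrm{(i)}$\Rightarrow$\textrm{(ii)} telescope $f(x)-f(y)$ through $P_tf$, bound the middle term by a directional gradient estimate along a two-piece path, and optimize over $t$ separately for each piece. The two competing bounds in \eqref{strong-lip-2} come from an isotropic gradient bound $|\nabla P_tf|\lesssim t^{\alpha-1}$ and a sharper radial one $|\partial_rP_tf|\lesssim t^{\alpha-2}(1+|x|)^{-1}$, exactly as you expect.

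There are a few genuine route differences worth flagging. First, the paper does not mollify by $P_\varepsilon$; it instead fixes a canonical representative of $f$ directly via $f(x)=P_1f(x)-\int_0^1\partial_\tau P_\tau f(x)\,d\tau$, which is legitimate since $P_tf\to f$ a.e.\ and the integral converges pointwise when \eqref{GLip} holds. Both devices work. Second, the paper first proves (Lemma~\ref{lem-x}) that the symmetric condition \eqref{strong-lip-2} is equivalent to the unsymmetric one \eqref{strong-lip}, and then establishes \eqref{strong-lip}: after a rotation placing $x$ on the first axis, it goes from $y=(y_1,y')$ orthogonally to $(y_1,0)$ (using the $|x-y|^\alpha$ bound) and then radially to $x$; your circular-arc-plus-radial path is a variant of the same idea, and it does produce the term $((|x|+|y|)\sin\theta)^\alpha$ after the $|x|\simeq|y|$/boundedness reductions you would need anyway, so both are fine. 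Third, for the gradient estimates themselves, your phrase ``interior estimates for this equation'' is the vague point: bare elliptic interior regularity only yields $|\nabla P_tf|\lesssim t^{-1}\|f\|_\infty$, not $t^{\alpha-1}$. The paper instead uses the semigroup factorization $\partial_{x_i}\partial_tP_{2t}f=2\int\partial_{x_i}P_t(x,y)\,\partial_tP_tf(y)\,dy$ combined with the $L^1$ bounds $\|\partial_{x_i}P_t(x,\cdot)\|_{L^1}\lesssim t^{-1}$ and $\|\partial_{x_1}P_t(x,\cdot)\|_{L^1}\lesssim t^{-2}(1+x_1)^{-1}$ (Proposition~\ref{p3.1x}, deduced from Theorems~\ref{t1.3} and~\ref{t1.4}) to get $|\partial_{x_i}\partial_tP_tf|\lesssim t^{\alpha-2}$ and $|\partial_{x_1}\partial_tP_tf|\lesssim t^{\alpha-3}(1+x_1)^{-1}$, then integrates in $t$ exactly as in your identity $\nabla_xP_tf=-\int_t^\infty\partial_s\nabla_xP_sf\,ds$. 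That semigroup trick works uniformly for all $t$ and avoids any splitting into local and global regimes; replacing it by ad hoc interior estimates in the local regime would require extra input beyond boundedness, so you should adopt the factorization argument for that step.
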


In one dimension, the inequality \eqref{strong-lip-2} reads
\begin{eqnarray*}
|f(x)-f(y)|\le K \min\lf\{|x-y|^\az,\;\,
\lf(\frac{|x-y|}{1+|x|+|y|}\r)^{\frac\az2}\,
\r\}.
\end{eqnarray*}
This is a combined Lipschitz condition, with exponent $\az$ for short distance
$|x-y|$ (in fact, shorter than $1/(1+|x|+|y|)$), and exponent $\az/2$,
with a different coefficient, for long
distance. In higher dimension, the expression $(|x|+|y|) \sin\theta$ describes
the ``orthogonal component" of the vector $x-y$, since it is the distance
from $y$ to the line in the direction $x$ plus the vice versa quantity.
To make this more clear, we state an unsymmetric
inequality equivalent to \eqref{strong-lip-2}.
For $x,\,y\in\rn$ with $x\neq0$, we decompose $y$ as $y=y_x+y_x^\prime$,
where $y_x$ is parallel to $x$ and $y_x^\prime $  orthogonal to $x$.
If $x=0$, we let $y_x=y$ and $y_x^\prime=0$,  and this holds for all  $x$  in case  $n
= 1$.
As proved in Lemma \ref{lem-x}
below,
 \eqref{strong-lip-2}  is equivalent to
\begin{equation}\label{strong-lip}
|f(x)-f(y)|\le K' \min\lf\{|x-y|^\az,\;\,
\lf(\frac{|x-y_x|}{1+|x|}\r)^{\frac\az2} +|y_x^\prime|^{\az}\,
\r\}
\end{equation}
 in any dimension, with a constant $K'>0$ comparable with  $K$.
This means that the combined Lipschitz condition applies
in the radial direction,
but in the orthogonal direction the exponent is always $\az$. In the
proof of Theorem~\ref{t1.1}, we shall use \eqref{strong-lip} instead of
\eqref{strong-lip-2}.

The proof of Theorem \ref{t1.1} relies
on pointwise estimates of the Ornstein-Uhlenbeck Poisson kernel
$P_t(x,y)$ and its derivatives,
which also have  independent interest.
Before stating these results, we need some notation.

Throughout the paper, we shall write $C$ for various positive constants which
depend only on $n$
and $\az$.
Given any two nonnegative quantities $A$ and $B$, the notation
$A\lesssim B$ stands for $A\le C B$ (we say that $A$ is controlled by $B$),
and $A\gtrsim B$  means $B\ls A$.
If $B\ls A\ls B$, we write $A\simeq  B$.
For positive quantities $X$,  we shall write
$$\exp^*(-X)$$
meaning  $\exp(-cX)$ for some constant
$c=c(n,\az)>0$ whose value may change from one occurrence to another. Then we have
for
instance $te^{-t}\simeq \exp^*(-t) $ for $t>1$,
since we allow different values
of $c$ in the two inequalities defining the  $\simeq  $ relation.
We shall often use inequalities like $\exp^*(-X) \ls \exp^*(-X)\,\exp^*(-X)$.

\begin{thm}\label{t1.2}
For all  $ t>0$ and $x,\:y\in \rn$,
  \begin{eqnarray*}
 P_t(x, y)  \le C[K_1(t,x,y) + K_2(t,x,y) + K_3(t,x,y) + K_4(t,x,y)],
\end{eqnarray*}
where
 \begin{eqnarray*}
K_1(t,x,y) &=& \frac t{(t^2+|x-y|^2)^{(n+1)/2}}\,\exp^\ast\lf(-t(1+|x|)\r); \\
K_2(t,x,y) &=&\frac t{|x|} \left(t^2+ \frac{|x-y_x|}{|x|}+|y_x^\prime |^2\right)^{
-\frac{n+2}2}\,\exp^\ast\lf(-\frac{(t^2+|y_x'|^2)|x|}{|x-y_x|}\r)\,
\chi_{\{|x|>1,\; x\cdot y>0,\, |x|/2\le |y_x|<|x|\}};\\
 K_3(t,x,y) &=& \mathrm{min}(1,t)\, \exp^\ast(-|y|^2);\\
K_4(t,x,y) &=&\frac t{|y_x|} \left(\log\frac {|x|} {|y_x|}\right)^{ -\frac32}  \,
\exp^\ast\left(-\frac {t^2}{\log\frac  {|x|} {|y_x|}}\right) \,
 \exp^\ast(-|y_x^\prime |^2) \, \chi_{\{x \cdot y>0,\:1<|y_x|<|x|/2\}}.
 \end{eqnarray*}
\end{thm}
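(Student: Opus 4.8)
The plan is to start from the integral representation \eqref{poisson-ker} and split the $s$-integral into regions according to the size of $s$, since the behaviour of the Mehler factor changes drastically between small $s$ (where $1-e^{-2s}\simeq 2s$ and the kernel looks Euclidean) and large $s$ (where $e^{-s}\to0$ and $1-e^{-2s}\simeq1$). The crude division is $s\lesssim1$ versus $s\gtrsim1$; the large-$s$ part, where $|y-e^{-s}x|^2 \simeq |y|^2 + O(e^{-s}|x||y|)$, will produce the term $K_3$ after integrating $\int t s^{-3/2} e^{-t^2/4s}\,ds \simeq \min(1,t)$ and estimating the Gaussian factor by $\exp^*(-|y|^2)$. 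So the real work is the region $0<s\lesssim1$.

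For small $s$, I would write $1-e^{-2s}= 2s(1+O(s))$ and $e^{-s}x = x - sx + O(s^2 x)$, and analyze the exponent
$$
\frac{t^2}{4s} + \frac{|y-e^{-s}x|^2}{1-e^{-2s}}.
$$
Decomposing $y=y_x+y_x'$ relative to $x$ (as in \eqref{strong-lip}), the numerator $|y-e^{-s}x|^2 = |y_x-e^{-s}x|^2 + |y_x'|^2$, and one must understand $|y_x - e^{-s}x|$ as a function of $s$: it vanishes (to first order) near $s_0 := \log(|x|/|y_x|)$ when $x\cdot y>0$ and $|y_x|<|x|$, i.e.\ when the curve $s\mapsto e^{-s}x$ passes close to $y_x$. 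This is the source of the non-Euclidean terms $K_2$ and $K_4$. The strategy is:
\begin{enumerate}
\item[(a)] When $|x|\lesssim1$, or more generally when $s_0$ is not in the effective range, the exponent is bounded below by a Euclidean expression $t^2/4s + c|x-y|^2/s$ together with an extra factor $t|x|$ or $|y|^2$ extracted from the linear term $-2s\, x\cdot(x-y)/(2s)\simeq -x\cdot(x-y)$ and the quadratic $e^{-2s}|x|^2 - |e^{-s}x|^2$; optimizing $\int_0^\infty t s^{-3/2} \exp(-t^2/4s - c|x-y|^2/s)\,ds \simeq t(t^2+|x-y|^2)^{-(n+1)/2}$ gives $K_1$.
\item[(b)] When $|x|>1$, $x\cdot y>0$ and $|y_x|\in[|x|/2,|x|)$, the relevant $s$ are small, of order $|x-y_x|/|x|$; here a careful Laplace/steepest-descent estimate of $\int t s^{-(n+2)/2} \exp(-t^2/4s - c(|x-y_x|/s + |y_x'|^2/s + s|x|^2))\,ds$, after rescaling, yields $K_2$, with the Gaussian weight $\exp^*(-(t^2+|y_x'|^2)|x|/|x-y_x|)$ coming from evaluating at the critical point $s\simeq\sqrt{(t^2+|y_x'|^2)/|x|^2}$ or $s\simeq|x-y_x|/|x|$, whichever dominates.
\item[(c)] When $x\cdot y>0$ and $1<|y_x|<|x|/2$, the curve reaches $y_x$ at $s_0=\log(|x|/|y_x|)\simeq1$ (bounded below but possibly large), and near $s_0$ one has $1-e^{-2s}\simeq1$, $|y_x-e^{-s}x|\simeq |x||s-s_0|$ for $s$ near $s_0$ (since $\frac{d}{ds}e^{-s}x\big|_{s_0}$ has length $|y_x|$... actually length $e^{-s_0}|x|=|y_x|$); integrating $\int t s^{-3/2}\exp(-t^2/4s - c|y_x|^2(s-s_0)^2 - c|y_x'|^2)\,ds$ over $s$ near $s_0\simeq\log(|x|/|y_x|)$ produces the Gaussian-in-$(s-s_0)$ integral $\simeq |y_x|^{-1}$ and the factor $(\log|x|/|y_x|)^{-3/2}\exp^*(-t^2/\log(|x|/|y_x|))\exp^*(-|y_x'|^2)$, i.e.\ $K_4$.
\end{enumerate}

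The main obstacle will be step (b)/(c): making the passage from ``the exponent has a near-minimum at $s_0$'' to a clean bound uniform in all parameters, because one must simultaneously control (i) the contribution away from $s_0$, where the exponent grows and is harmless, (ii) the Gaussian approximation of the exponent near $s_0$, keeping track of whether $t$, $|y_x'|$ or $|s-s_0|$ dominates, and (iii) the polynomial prefactor $s^{-3/2}$ or $(1-e^{-2s})^{-n/2}$ evaluated near $s_0$. In particular the two different forms $K_2$ (for $|y_x|\ge|x|/2$, $s_0$ small, prefactor $(t^2+|x-y_x|/|x|+|y_x'|^2)^{-(n+2)/2}$) and $K_4$ (for $|y_x|<|x|/2$, $s_0\simeq1$, prefactor involving $\log$) arise from the same Laplace-type integral but in regimes where the Gaussian width $\simeq s_0^{3/2}/|y_x|$ (coming from the second derivative of the exponent) and the linear term behave differently; the matching of the two cases at $|y_x|\simeq|x|/2$, and the absorption of all lower-order corrections into the $\exp^*$ notation, is the technically delicate part. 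Everything else is routine once the correct substitution ($s\mapsto s/s_0$ in case (c), $s\mapsto |x|^2 s$ in case (b)) is made.
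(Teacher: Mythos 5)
Your plan takes essentially the same route as the paper's proof of Proposition 4.1: rotate so that $x=(x_1,0,\dots,0)$, locate the value $s_0=\log(x_1/y_1)$ (equivalently $\sigma_0=(x_1-y_1)/x_1$ after the substitution $\sigma=1-e^{-s}$) at which $|y-e^{-s}x|$ is minimized, split the $s$-integral into a neighbourhood of $s_0$ and the rest, and read off $K_1$, $K_2$, $K_3$, $K_4$ according to the position of $y_1$ relative to $0$, $x_1/2$ and $x_1$. One caution worth fixing: your opening claim that the region $s\gtrsim 1$ only produces $K_3$ conflicts with your own case (c), since $K_4$ is precisely the contribution of a neighbourhood of $s_0=\log(|x|/|y_x|)>\log 2$ when $1<|y_x|<|x|/2$; so the crude dichotomy ``$s\lesssim 1$ versus $s\gtrsim 1$'' does not separate the cases and must be replaced from the outset by the finer split around $s_0$, which is what the paper does with its three $\sigma$-intervals in each of cases (ii) and (iii).
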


In Section \ref{s6}, we consider the sharpness of Theorem \ref{t1.2}.
In particular, we exhibit for each of the four kernels $K_i(t,x,y)$
a set $\wz E_i$
of points $(t,x,y)$ in which $P_t(x,y)\simeq K_i(t,x,y)$
but where the other three terms $K_j(t,x,y)$
are much smaller; see the proof of Theorem \ref{t6.1}(b).
Thus none of the four terms can be suppressed in Theorem \ref{t1.2}.
It can also be verified that  for each $i$ there exist (many) points $(t,x)$
such that the integral of $K_i(t,x,y)$
with respect to $y$, taken over those $y$
for which $(t,x,y) \in\wz E_i $,
is comparable to $1=\int_\rn P_t(x,y)\,dy$.
This means that for these $(t,x)$, the kernel $K_i(t, x,\cdot)$
contains a substantial part of $P_t(x,\cdot)$.

We make some comments about the four terms $K_i$ in
Theorem  \ref{t1.2}, focusing on large values of  $|x|$.

Consider first
small values of $t$.
 The term
$K_1(t,x,y)$ is for $t < 1/(1+|x|)$ essentially the standard Poisson
kernel.
For us, the most significant term is $K_2(t, x,y)$,   since
it is the key to the term with exponent $\alpha/2$ in   \eqref{strong-lip-2}
and \eqref{strong-lip}.
In one dimension,
\begin{eqnarray}\label{1dim}
P_t(x,y) \ls K_2(t,x,y) \ls \wz K_2(t,x,y) : =
\frac1{t^2|x|} \lf(1+\frac{|x-y|}{t^2|x|}\r)^{-\frac32},
\end{eqnarray}
and these estimates are sharp when $x>1$ and $3x/4 < y < x-t^2x$
(see Section 6).
Notice that $\wz K_2(t, x,y)$ is  a Poisson-like kernel
but with a dilation parameter $t^2|x|$ which depends on $x$,
and with a  slower decay as $y\to\infty$.
Further, the integral in  $y$ of each of the three kernels in  \eqref{1dim}
over the interval  $(3x/4, x-t^2x)$ is of order of magnitude
$1 = \int_{\rr} P_t(x,y)\,dy$.
In higher dimension,  $K_2(t, x,y)$ has, as a function of $y$, a
different behavior in the $x$ direction and in the  directions
orthogonal to  $x$.

Our Poisson kernel $P_t$ can be compared with the standard Poisson
kernel $\cp_t$ in the following way. Roughly speaking, the
main part of the standard Poisson integral $\cp_t f(x)$ is essentially the
mean value of the function $f$ in a ball of radius $t$, centered
at $x$.  The analog for $P_t f(x)$ is the mean value in a cylinder in
the $x$ direction of length $t^2|x|$, radius $t$ and center  $x-t^2x$.
This displacement from $x$ of the center is not very significant, since
the displacement is not  larger than the length.

This displacement comes from the Mehler kernel; the subordination
formula  says that  $P_t$ is a weighted mean in the $t$ variable
of values of the Mehler kernel. For small $t$, the Mehler kernel gives
essentially the mean value of the function in
 a ball
of radius $\sqrt t$ and center $e^{-t}x \thickapprox x - tx$. So
for $t<<1/|x|^2$, the displacement is significant here, since it is much
larger than the radius. Actually, it is only this displacement that
makes the Mehler kernel essentially different from the standard heat
kernel, for small $t$.  Observe that the displacement is in the
negative $x$ direction in both cases.

For large $t$, the Mehler kernel has a  dilation factor which is essentially 1,
and the displacement is to the origin. As a result, we get for $P_t$  the terms
  $K_3(t,x,y)$ and  $K_4(t,x,y)$, which are large for small $y$ only.

After finishing this paper, we learned that Garrig\'os et al.\
\cite[Lemmas~4.1 and 4.2]{GHSTV}
  also estimated the kernel  $P_t(x,y)$.
Their estimates are rather different from ours and intended
for other purposes.

From the proof of Theorem \ref{t1.2},
it will be seen that $t\partial_tP_t$
and $t\partial_{x_i}P_t$ with $1\le i\le n$
satisfy the same estimates as $P_t$, as follows.

\begin{thm} \label{t1.3}
Let $i\in\{1,2,\dots,n\}$.
Then
for all $t>0$ and $x,y\in\rn$,
\begin{eqnarray*}%\label{eq:partial}
|t\partial_t P_t(x,y)|+|t\partial_{x_i} P_t(x,y)|
\le C \lf[K_1(t,x,y)+K_2(t,x,y)+K_3(t,x,y)+K_4(t,x,y)\r].
\end{eqnarray*}
\end{thm}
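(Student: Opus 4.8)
The plan is to show that the differentiated kernels $t\partial_t P_t$ and $t\partial_{x_i} P_t$ admit exactly the same four-term majorant as $P_t(x,y)$ by re-running the proof of Theorem~\ref{t1.2} with the differentiation carried inside the subordination integral \eqref{poisson-ker}. Starting from
\[
P_t(x,y)= \frac1{2\pi^{(n+1)/2}} \int_0^\infty
\frac{t}{s^{3/2}} \,e^{-\frac{t^2}{4s}}\,
\frac{e^{-\frac{|y-e^{-s}x|^2}{1-e^{-2s}}}}{(1-e^{-2s})^{n/2}} \,ds,
\]
one differentiates under the integral sign. The factor $t\,e^{-t^2/(4s)}/s^{3/2}$ satisfies $|t\partial_t(t\,e^{-t^2/(4s)}/s^{3/2})| = |(1-t^2/(2s))\,t\,e^{-t^2/(4s)}/s^{3/2}| \ls (1+t^2/s)\,e^{-t^2/(4s)}\,t/s^{3/2} \ls \exp^*(-t^2/s)\, t/s^{3/2}$, so the $t$-derivative only costs an extra harmless factor $\exp^\ast(-t^2/s)$ of exactly the type that is already produced and absorbed throughout the proof of Theorem~\ref{t1.2} (recall $\exp^*(-X)\ls \exp^*(-X)\exp^*(-X)$). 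For the spatial derivative, $\partial_{x_i}$ hits only the Mehler factor, producing
\[
\partial_{x_i}\, e^{-\frac{|y-e^{-s}x|^2}{1-e^{-2s}}}
= \frac{2e^{-s}(y_i-e^{-s}x_i)}{1-e^{-2s}}\; e^{-\frac{|y-e^{-s}x|^2}{1-e^{-2s}}},
\]
and the extra polynomial factor $|t\,e^{-s}(y_i-e^{-s}x_i)/(1-e^{-2s})|$ is controlled, via $|z|e^{-cz^2}\ls \exp^*(-z^2)$ applied with $z=(y_i-e^{-s}x_i)/\sqrt{1-e^{-2s}}$, by $t\,(1-e^{-2s})^{-1/2}\exp^\ast\bigl(-|y-e^{-s}x|^2/(1-e^{-2s})\bigr)$; the surplus $t(1-e^{-2s})^{-1/2}$ is again of a size that the estimates in the proof of Theorem~\ref{t1.2} already tolerate (it is dominated by the factors that give rise to the various $t/|x|$, $t/|y_x|$ and standard-Poisson prefactors). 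In short, in every region of $(t,x,y)$-space where the proof of Theorem~\ref{t1.2} splits the $s$-integral, the integrand for $|t\partial_t P_t|$ or $|t\partial_{x_i}P_t|$ differs from that for $P_t$ only by factors of the form $\exp^\ast(-X)$ or a bounded power of the already-present Gaussian exponents, and the same estimate goes through term by term, yielding the same $K_1+K_2+K_3+K_4$.

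Concretely, I would organize the argument as follows. First, record the two elementary lemmas above: (a) $\bigl|t\partial_t\bigl(t\,s^{-3/2}e^{-t^2/(4s)}\bigr)\bigr|\ls t\,s^{-3/2}\,\exp^\ast(-t^2/s)$ for all $s,t>0$; and (b) for each coordinate, $\bigl|\partial_{x_i} M_{e^{-s}}(x,y)\bigr|\ls (1-e^{-2s})^{-1/2}\exp^\ast\bigl(-|y-e^{-s}x|^2/(1-e^{-2s})\bigr)$, where $M_r(x,y)=e^{-|y-rx|^2/(1-r^2)}/(1-r^2)^{n/2}$. Second, observe that all the pointwise bounds in the proof of Theorem~\ref{t1.2} are obtained by bounding $M_{e^{-s}}(x,y)$ (or its relevant parts) by an expression of the shape $(1-e^{-2s})^{-n/2}\exp^\ast(-\cdots)$ and then integrating the resulting scalar function of $s$ against $t\,s^{-3/2}e^{-t^2/(4s)}$; the extra factors from (a) and (b) only improve (or at worst leave unchanged, up to constants depending on $n,\az$) each of those scalar estimates, because an additional $\exp^\ast$ factor can always be split off and absorbed. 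Third, I would run through the four regions/terms one at a time, pointing out in each case that the displayed estimate for $K_i$ in Theorem~\ref{t1.2} remains valid with $P_t(x,y)$ replaced by $|t\partial_t P_t(x,y)|+|t\partial_{x_i}P_t(x,y)|$; for most of these this is a one-line remark, and the proof of Theorem~\ref{t1.2} should be written so that this transfer is essentially automatic.

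The main obstacle I anticipate is bookkeeping rather than any new idea: one must make sure that the surplus factor $t\,(1-e^{-2s})^{-1/2}$ coming from the spatial derivative is genuinely harmless in every sub-region, including the delicate ones that produce $K_2$ and $K_4$, where $1-e^{-2s}$ can be comparable to a small quantity like $|x-y_x|/|x|$ or $\log(|x|/|y_x|)$. In those regions the relevant $s$-values are small (of size $\simeq |x-y_x|/|x|$ or $\simeq\log(|x|/|y_x|)$), so $(1-e^{-2s})^{-1/2}\simeq s^{-1/2}$, and the extra $t\,s^{-1/2}$ must be shown to be dominated by the other $s$-dependent factors after integration — essentially because the factor $e^{-t^2/(4s)}$ together with the measure $t\,s^{-3/2}\,ds$ already controls powers of $t/\sqrt s$, via $\int_0^\infty (t/\sqrt s)^k\,t\,s^{-3/2}e^{-t^2/(4s)}\,ds\simeq 1$ for fixed $k$, and because $\partial_{x_i}$ acting on the Gaussian also brings down a matching $(y_i-e^{-s}x_i)/\sqrt{1-e^{-2s}}$ that is part of the very exponent being estimated. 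I expect this to be where the proof of Theorem~\ref{t1.2} has to be quoted most carefully, but no genuinely new estimate is needed: the point of the theorem is precisely that the same four kernels already absorb these derivative factors, and this should be remarked explicitly at the relevant places in the proof of Theorem~\ref{t1.2}, so that Theorem~\ref{t1.3} follows without repeating the whole argument.
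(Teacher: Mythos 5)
Your overall strategy is the same as the paper's: differentiate under the subordination integral \eqref{poisson-ker}, show that the extra factor produced by the derivative can be absorbed into the exponentials at the cost of replacing $\exp(-\cdot)$ by $\exp^\ast(-\cdot)$, and then observe that all of Proposition \ref{p4.1} goes through unchanged. Your treatment of $t\partial_t P_t$ is correct and identical to the paper's. However, your estimate of the extra factor in $t\partial_{x_i}P_t$ has a genuine gap.

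You bound the extra factor $\frac{t\,e^{-s}|y_i-e^{-s}x_i|}{1-e^{-2s}}$, after absorbing $\frac{|y_i-e^{-s}x_i|}{\sqrt{1-e^{-2s}}}$ into the Gaussian, by the surplus $\frac{t}{\sqrt{1-e^{-2s}}}$, \emph{discarding the factor} $e^{-s}$. This is harmless for $s\lesssim 1$, where $\frac{e^{-s}}{\sqrt{1-e^{-2s}}}\simeq \frac{1}{\sqrt{s}}$ anyway, but it is fatal for large $s$: there $\frac{t}{\sqrt{1-e^{-2s}}}\simeq t$, which the factor $e^{-t^2/4s}$ (with $s\gtrsim1$) cannot absorb. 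Concretely, in the regions that produce $K_3$ and $K_4$ one has $s\geq\log 2$, and inserting the surplus $t$ into the estimates for $J_2$, $J_{2,2}$, etc.\ would give bounds of the form $t\,K_3$ and $t\,K_4$, which are \emph{not} $\lesssim K_3,K_4$ when $t>1$. Your claim that ``in those regions the relevant $s$-values are small $(\simeq\log(|x|/|y_x|))$, so $(1-e^{-2s})^{-1/2}\simeq s^{-1/2}$'' is wrong for $K_4$: there $\log(|x|/|y_x|)\geq\log 2$ and $(1-e^{-2s})^{-1/2}\simeq1$.

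The fix is exactly what makes the paper's argument work: keep the $e^{-s}$ and use that the combination $\frac{\sqrt{s}\,e^{-s}}{\sqrt{1-e^{-2s}}}$ is bounded for \emph{all} $s>0$, so the full extra factor factors as
\[
\frac{t\,e^{-s}(y_i-e^{-s}x_i)}{1-e^{-2s}}
= \frac{t}{\sqrt{s}}\cdot\frac{\sqrt{s}\,e^{-s}}{\sqrt{1-e^{-2s}}}\cdot\frac{y_i-e^{-s}x_i}{\sqrt{1-e^{-2s}}}
\quad\Longrightarrow\quad
\Bigl|\,\cdot\,\Bigr|\,e^{-\frac{t^2}{4s}}\,e^{-\frac{|y-e^{-s}x|^2}{1-e^{-2s}}}
\lesssim \exp^\ast\!\Bigl(-\tfrac{t^2}{s}\Bigr)\,\exp^\ast\!\Bigl(-\tfrac{|y-e^{-s}x|^2}{1-e^{-2s}}\Bigr),
\]
after which the integrand for $|t\partial_{x_i}P_t|$ has literally the same form as the one for $P_t$, with no leftover surplus at all, and Proposition \ref{p4.1} applies verbatim. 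Once you make this correction your proof coincides with the paper's.
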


For the derivative of $P_t(x,y)$ with respect to $x$ in the radial
direction, i.\,e., along the vector $x$,
we obtain a sharper estimate than that of Theorem \ref{t1.3}.
This result will be of fundamental importance
in the proof of Theorem \ref{t1.1}.
To state it in a simple way, we first observe that
$P_t$ is invariant under rotation in the sense that
$P_t(Ax,Ay)=P_t(x,y)$ for any orthogonal
matrix $A$. The same is true for all the kernels we use.
This means that in our estimates,
we can assume without restriction that
$x=(x_1,0,\dots,0)$ with $x_1\ge0$.
Then we will write the decomposition of $y$ as
$y=(y_1,y')\in\rr\times \rr^{n-1}$.

\begin{thm}\label{t1.4}
For all $t>0$, $x=(x_1,0,\dots,0)\in\rn$ with $x_1\ge0$ and
$y=(y_1,y')\in\rn$,
\begin{eqnarray*}
|\partial_{x_1}P_t(x,y)|
\le C
[Z_1(t,x,y)+Z_2(t,x,y)+Z_3(t,x,y)+Z_4(t,x,y)],
\end{eqnarray*}
where
\begin{eqnarray*}
Z_1(t,x,y)&=&\frac t{(t^2+|x-y|^2)^{(n+2)/2}} \exp^\ast(-t(1+|x|));\\
Z_2(t,x,y)&=&  \frac{t}{x_1^2} \lf(t^2+\frac{x_1-y_1}{x_1}+|y'|^2\r)^{-\frac{n+4}2}
\exp^\ast\lf(-\frac{(t^2+|y'|^2)x_1}{x_1-y_1}\r)\chi_{\{x_1>1, \,x_1/2\le y_1<
x_1\}};\\
Z_3(t,x,y)&=&\frac{\min\{t, \;t^{-2}\}}{1+|x|}\exp^\ast(-|y|^2);\\
Z_4(t,x,y)&=&\frac t{x_1\,y_1} \lf(\log\frac {x_1} {y_1}\r)^{-\frac52}  \,
\exp^\ast\left(-\frac {t^2}{\log\frac  {x_1} {y_1}}\right) \,
 \exp^\ast(-|y'|^2) \, \chi_{\{1<y_1<x_1/2\}}.
\end{eqnarray*}
\end{thm}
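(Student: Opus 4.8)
The plan is to differentiate the integral representation \eqref{poisson-ker} with respect to $x_1$. Only the Mehler factor depends on $x_1$, and since
\begin{equation*}
\partial_{x_1}\exp\Bigl(-\frac{|y-e^{-s}x|^2}{1-e^{-2s}}\Bigr)=\frac{2e^{-s}(y_1-e^{-s}x_1)}{1-e^{-2s}}\,\exp\Bigl(-\frac{|y-e^{-s}x|^2}{1-e^{-2s}}\Bigr),
\end{equation*}
differentiating under the integral sign (legitimate by the rapid decay of the integrand) would give
\begin{equation*}
\partial_{x_1}P_t(x,y)=\frac1{2\pi^{(n+1)/2}}\int_0^\infty\frac t{s^{3/2}}\,e^{-\frac{t^2}{4s}}\,\frac{2e^{-s}(y_1-e^{-s}x_1)}{1-e^{-2s}}\,\frac{e^{-\frac{|y-e^{-s}x|^2}{1-e^{-2s}}}}{(1-e^{-2s})^{n/2}}\,ds .
\end{equation*}
So $\partial_{x_1}P_t$ is the integral defining $P_t$, decorated with the extra factor $B(s):=2e^{-s}(y_1-e^{-s}x_1)/(1-e^{-2s})$. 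I would then redo, term by term, the splitting of the $s$-integral and the geometric case analysis from the proofs of Theorems \ref{t1.2} and \ref{t1.3}, now carrying $B(s)$ along.

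On the portions of each case where $y_1-e^{-s}x_1$ stays away from zero --- in particular the off-diagonal part of the $K_1$-region, the large-$s$ ranges, and all of the case $x\cdot y\le 0$ --- I would use the elementary inequality $|u|\,e^{-u^2}\lesssim e^{-u^2/2}$ with $u=(y_1-e^{-s}x_1)/\sqrt{1-e^{-2s}}$; this reduces matters to the estimate for $P_t$ with $B(s)$ replaced by $e^{-s}(1-e^{-2s})^{-1/2}$ (at the cost of halving the constant in the Mehler Gaussian, which is harmless under the $\exp^\ast$ convention). Since $e^{-s}(1-e^{-2s})^{-1/2}\simeq s^{-1/2}$ for $0<s\le1$ and $\simeq e^{-s}$ for $s\ge1$, the small-$s$ contribution then acquires exactly the extra factor $(t^2+|x-y|^2)^{-1/2}$ that turns $K_1$ into $Z_1$, while the large-$s$ contribution gains an exponential factor $e^{-s}$, more than enough to produce the relevant parts of $Z_3$ and $Z_4$.

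The crude bound is too wasteful, however, precisely on the ranges of $s$ near the critical value $s_\ast$ solving $e^{-s_\ast}x_1=y_1$, and those ranges carry the bulk of the $K_2$- and $K_4$-contributions (and part of the $K_3$-contribution); recovering the sharp powers there is the crux. On such a range I would use the same normalizing change of variable $s\mapsto v$ used in the proof of Theorem \ref{t1.2}: it turns the Mehler exponential into $e^{-|v|^2}$ (up to a controllable error) and the rest of the integrand into a slowly varying amplitude $F(v,x,y)$, so that the relevant piece of $P_t(x,y)$ takes the form $\int F(v,x,y)\,e^{-|v|^2}\,dv$. Because $v$ is the integration variable, $\partial_{x_1}$ does not touch $e^{-|v|^2}$ (the $x_1$-dependence of the endpoints giving only negligible boundary terms); equivalently, after the substitution the factor $y_1-e^{-s}x_1$ in $B(s)$ becomes a constant times $v$, which integrates with cancellation against the nearly even amplitude. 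Computing $\partial_{x_1}s$ from the relation defining $v$ and differentiating each factor of $F$ --- including the Jacobian $ds/dv$ --- should yield $|\partial_{x_1}F|\lesssim |F|/L$, where $L$ is the length scale of the region in the $x_1$-variable: $L\simeq x_1\bigl(t^2+\frac{x_1-y_1}{x_1}+|y'|^2\bigr)$ on the $K_2$-region and $L\simeq x_1\log(x_1/y_1)$ on the $K_4$-region. Integrating in $v$ would then give $|\partial_{x_1}P_t(x,y)|\lesssim L^{-1}K_2\simeq Z_2$ and $\lesssim L^{-1}K_4\simeq Z_4$ on the respective regions; the gain $L^{-1}$ accounts exactly for the extra power and the extra $1/x_1$ in $Z_2$ over $K_2$ and in $Z_4$ over $K_4$, and similarly for the $(1+|x|)^{-1}$ and $t^{-2}$-type gain in $Z_3$ over $K_3$.

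The hard part is this last step: verifying that, after the substitution, the amplitude $F$ is indeed slowly varying on the scale $L$ --- that the logarithmic $x_1$-derivatives of all of its factors, and of the Jacobian, are $O(L^{-1})$ uniformly in $t$ and in the geometric parameters --- and that the error made in normalizing the Gaussian, together with its $x_1$-derivative, is negligible. One must also make sure that the finitely many regions produced by the proof of Theorem \ref{t1.2}, and the transitions between them, are all accounted for. Summing the regional estimates would then give the claimed bound.
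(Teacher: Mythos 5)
Your outlined approach is essentially the one the paper follows: differentiate under the integral to pick up the odd factor $2e^{-s}(y_1-e^{-s}x_1)/(1-e^{-2s})$ coming from $\partial_{x_1}$ of the Mehler exponential; absorb it crudely via $|u|e^{-u^2}\lesssim e^{-u^2/2}$ whenever $y_1-e^{-s}x_1$ is bounded away from zero (the surviving factor $e^{-s}/\sqrt{1-e^{-2s}}\simeq\sigma^{-1/2}$ for small $s$ and $\simeq e^{-s}$ for large $s$ then upgrades $K_1$ to $Z_1$ and, after integrating $\int e^{-s}\exp^\ast(-|y_1-e^{-s}x_1|^2)\,ds\lesssim\min\{1,1/x_1\}$, gives the $Z_3$-pieces); and exploit cancellation near the critical $\sigma_0=(x_1-y_1)/x_1$. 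Your stated length scales $L$ are also the correct gains. What you defer --- verifying the slowly-varying property --- is, however, where essentially all of the paper's Section~5 lives, and the paper's implementation is a bit more robust than either of your two formulations and worth noting. Rather than pushing $\partial_{x_1}$ through the normalizing substitution and its Jacobian (where the endpoint and chain-rule bookkeeping gets awkward), or locating a quantitative ``near-evenness'' of the amplitude in $v$, the paper keeps the explicit antisymmetric factor $u=(\sigma-\sigma_0)x_1$ in the differentiated integral, uses the \emph{exact} evenness $F(\cdot,w)=F(\cdot,-w)$ of the amplitude in the second slot to write
\begin{equation*}
J_{1,2}=\frac{C}{x_1}\int_0^{(x_1-y_1)/4} u\,\bigl[F(s(u),u)-F(s(-u),u)\bigr]\,du,
\end{equation*}
and then bounds the bracket by a mean value estimate in the first ($\tau$-) slot only: $|s(u)-s(-u)|\cdot\sup_\tau|\partial_\tau F(\tau,u)|$. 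The remaining work is an explicit but delicate computation: showing that each of the four terms of $\partial_\tau F$ is $\lesssim \tau^{-1}$ times a suitably weakened $F$ after absorbing the worst pieces (such as $t^2/\tau^2$ and $(u^2+|y'|^2)e^{-2\tau}/(1-e^{-2\tau})^2$) into the $\exp^\ast$'s, and that $|s(u)-s(-u)|\lesssim u/x_1$ on $J_{1,2}$ (because there $y_1\simeq x_1$) but only $\lesssim u/y_1$ on $J_{2,2}$, with $\tau\simeq\sigma_0$ in the first case and $\tau\simeq\log(x_1/y_1)$ in the second. Those are the genuine computations producing the gains $L^{-1}$ you anticipate; your proposal correctly identifies the plan but stops at the step where the real content of the proof is.
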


The paper is organized as follows. In Section \ref{s2}, we prove the equivalence
between the conditions \eqref{strong-lip-2} and \eqref{strong-lip}
and then give some
basic estimates  needed  later.
Section \ref{s3} contains the proof of Theorem \ref{t1.1}, assuming
Theorems \ref{t1.2}, \ref{t1.3} and \ref{t1.4}.
The proofs of Theorems \ref{t1.2} and \ref{t1.3} are given in Section \ref{s4}.
Section \ref{s5} contains the proof of Theorem \ref{t1.4}, which is based on
that of Theorem \ref{t1.2} but now exploiting also some
cancellation in the integral estimates.
Finally, Section \ref{s6} deals with the sharpness
of our estimates for $P_t$.

%%%%%%%%%%%%%%%%%%%%%%%%%%%%%%%%%%%%%%%%%%%%%%%%%%%%%%%%%%%%%

\section{Auxiliary results}\label{s2}

\begin{lem}\label{lem-x}
Let  $\alpha \in (0, 1)$. The conditions \eqref{strong-lip-2} and \eqref{strong-lip}
are equivalent,
 and each of them implies that  the function  $f$  is bounded.
More precisely,
\begin{eqnarray}\label{lem-x-e0}
\sup_{x\in\rn} |f(x)-f(0)| \lesssim  \inf K
    %\{K>0:\, K \,\textup{satisfies}\,   \eqref{strong-lip-2}\}
\simeq  \inf K'.
    %\{K>0:\, K \,\textup{satisfies}\,   \eqref{strong-lip}\}.
\end{eqnarray}
\end{lem}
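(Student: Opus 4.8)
The plan is to prove the equivalence directly, by a single application of the triangle inequality through a well‑chosen intermediate point; boundedness of $f$ then comes for free by specializing the second variable to the origin, and \eqref{lem-x-e0} follows. For $p,q\in\rn$ let $A(p,q)$ denote the quantity inside the braces on the right of \eqref{strong-lip-2} and $B(p,q)$ the quantity inside the braces on the right of \eqref{strong-lip}, so that \eqref{strong-lip-2} reads $|f(p)-f(q)|\le K\,A(p,q)$ for all $p,q$, while \eqref{strong-lip} reads $|f(p)-f(q)|\le K'B(p,q)$ (with $q$ decomposed as $q=q_p+q_p'$ along the line $\rr p$, and $q_p=q$, $q_p'=0$ when $p=0$, exactly as in the statement). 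Throughout I would use only: $A(p,q)\le|p-q|^\az$ and $B(p,q)\le|p-q|^\az$; the identity $|p-q|^2=|p-q_p|^2+|q_p'|^2$; the elementary inequality $a^\az+b^\az\le 2(a^2+b^2)^{\az/2}$ for $a,b\ge 0$ (valid since $\az<1$); the fact that the ``angle'' term in $A(p,q)$ vanishes when $p$ and $q$ are collinear, whence $A(p,q)\le(\frac{|p-q|}{1+|p|+|q|})^{\az/2}\le(\frac{|p-q|}{1+|p|})^{\az/2}$; and the fact that $B(p,q)=\min\{|p-q|^\az,(\frac{|p-q|}{1+|p|})^{\az/2}\}$ whenever $q\in\rr p$. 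Putting $q=0$ in either \eqref{strong-lip-2} or \eqref{strong-lip} kills the angle/orthogonal term and turns the long‑distance term into $(\frac{|p|}{1+|p|})^{\az/2}\le 1$; hence $\sup_{p}|f(p)-f(0)|\le\inf K$ and also $\le\inf K'$, so in particular $f$ is bounded.

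\emph{From \eqref{strong-lip-2} to \eqref{strong-lip}.} Fix $x,y$ and chain through $w:=y_x$, which lies on $\rr x$. By the triangle inequality and \eqref{strong-lip-2}, $|f(x)-f(y)|\le K\,(A(x,w)+A(w,y))$. Since $x,w$ are collinear, $A(x,w)\le\min\{|x-y_x|^\az,(\frac{|x-y_x|}{1+|x|})^{\az/2}\}$, and $A(w,y)\le|w-y|^\az=|y_x'|^\az$. Thus $A(x,w)+A(w,y)$ is bounded above both by $(\frac{|x-y_x|}{1+|x|})^{\az/2}+|y_x'|^\az$ and, using $|x-y_x|^2+|y_x'|^2=|x-y|^2$ together with $a^\az+b^\az\le2(a^2+b^2)^{\az/2}$, by $2|x-y|^\az$; hence it is at most $2\min\{|x-y|^\az,(\frac{|x-y_x|}{1+|x|})^{\az/2}+|y_x'|^\az\}=2B(x,y)$. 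So \eqref{strong-lip} holds with $K'=2K$.

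\emph{From \eqref{strong-lip} to \eqref{strong-lip-2}.} Here the choice of intermediate point is the delicate point. Since $A$ is symmetric and \eqref{strong-lip} is assumed for all ordered pairs, I may assume $|y|\ge|x|$; I would then chain through $w:=x_y$, the orthogonal projection of $x$ onto $\rr y$ --- the line through the \emph{larger} of the two points, which is what makes the argument go, since then $1+|y|\ge\frac12(1+|x|+|y|)$. One has $x-w=x_y'$ orthogonal to $y$, so $|x-w|=|x|\sin\theta$, and $w\in\rr y$. Applying \eqref{strong-lip} to the ordered pairs $(w,x)$ and $(y,w)$ gives $|f(x)-f(y)|\le K'(B(w,x)+B(y,w))$, where $B(w,x)\le|w-x|^\az=|x_y'|^\az$ and $B(y,w)\le\min\{|y-w|^\az,(\frac{|y-w|}{1+|y|})^{\az/2}\}$. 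Now $|x_y'|^\az=(|x|\sin\theta)^\az\le((|x|+|y|)\sin\theta)^\az$, while $|y-w|\le|x-y|$ (again by $|x-y|^2=|x_y-y|^2+|x_y'|^2$) and $1+|y|\ge\frac12(1+|x|+|y|)$ give $\min\{|y-w|^\az,(\frac{|y-w|}{1+|y|})^{\az/2}\}\le\min\{|x-y|^\az,2^{\az/2}(\frac{|x-y|}{1+|x|+|y|})^{\az/2}\}$; combining these two bounds exactly as in the previous paragraph yields $B(w,x)+B(y,w)\ls A(x,y)$. So \eqref{strong-lip-2} holds with $K\ls K'$.

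The two implications together give $\inf K\simeq\inf K'$, and combined with the first paragraph this yields $\sup_{p}|f(p)-f(0)|\le\min\{\inf K,\inf K'\}\simeq\inf K\simeq\inf K'$, which is \eqref{lem-x-e0}. The one substantive point --- and the reason a naive chaining does not suffice --- is the choice of intermediate point in \eqref{strong-lip}$\Rightarrow$\eqref{strong-lip-2}: if one instead chained through $y_x$, the step $x\to y_x$ would contribute a radial term normalized by the possibly small quantity $1+|x|$, and when $|x|$ is small while $|y|$ is large this can dominate $A(x,y)$; projecting onto the line through the larger point replaces $1+|x|$ by something comparable to $1+|x|+|y|$ and removes the mismatch. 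A secondary, purely bookkeeping, matter --- caused by the asymmetry of $B$ in its two arguments --- is to keep straight which argument plays the role of the base point at each step; but since one invokes only $B(p,q)\le|p-q|^\az$ and the collinearity of the intermediate point with the base point, the argument is uniform and the degenerate cases ($x=0$, or $x\perp y$, where the relevant projection is the origin) need no separate treatment.
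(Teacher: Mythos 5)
Your proposal is correct, and it takes a genuinely different route from the paper. The paper's proof works by directly comparing the two minima $A(x,y)$ and $B(x,y)$ pointwise: it shows $A\simeq B$ in the regimes $|x|+|y|\le 2$ and $|y|/2<|x|<2|y|$, and in the remaining regime (where $A$ and $B$ need not be pointwise comparable) both quantities are $\gtrsim 1$, so it falls back on the boundedness already established. You instead avoid the case analysis entirely by a chaining (triangle inequality) argument on $f$ through a well-chosen intermediate point — $y_x$ to go from \eqref{strong-lip-2} to \eqref{strong-lip}, and $x_y$ with $|y|\ge|x|$ (so that $1+|y|\ge\tfrac12(1+|x|+|y|)$) to go back — together with the Pythagorean identity $|p-q|^2=|p-q_p|^2+|q_p'|^2$ and the concavity bound $a^\az+b^\az\le 2(a^2+b^2)^{\az/2}$. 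What your approach buys is uniformity: no residual case requiring boundedness, since the chaining bypasses the failure of pointwise comparability of $A$ and $B$. What the paper's approach buys is the slightly stronger pointwise information $A\simeq B$ away from the ``long-distance, unbalanced'' regime. Your identification of the projection onto the line through the \emph{larger} point as the crux of the harder implication is exactly right, and your handling of the degenerate cases (vanishing projection, $x=0$) is consistent with the paper's convention $y_x=y$, $y_x'=0$ when $x=0$.
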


\begin{proof}
 To see that each of the two conditions implies boundedness, it is
 enough to take  $y=0$ in either condition. This also gives the
inequality in \eqref{lem-x-e0}  and the analogous
inequality
 for \eqref{strong-lip}.

Let $A$ and  $B$ denote the minima appearing in
\eqref{strong-lip-2} and \eqref{strong-lip}, respectively.
If $|x|+|y|\le 2$, one finds that $A \simeq |x-y|^\az\simeq  B$.
Assume next that   $|y|/2<|x|<2|y|$. Then  $|y_x^\prime| \simeq (|x|+|y|) \sin\theta$
and  it is obvious that  $B \ls A$. The converse   $A \ls  B$
is easy when  $|y_x^\prime| \le |x-y_x|$. When  $|y_x^\prime| > |x-y_x|$,
we have
\[
 A \le |x-y|^\az \simeq |y_x^\prime|^\az \le B.
\]
Thus it only remains to consider the case when  $|x|+|y|> 2$ and
 $|x|/|y| \notin (1/2, 2)$. But then  $A,\: B \gtrsim 1$, and
via the boundedness we just proved, we see that each of the inequalities
 \eqref{strong-lip-2} and \eqref{strong-lip} implies the other for these
$x,\:y$.

Altogether, this proves the equivalence, and  \eqref{lem-x-e0}
also follows.
\end{proof}

\begin{lem} \label{lem-xx}
Let $a,\, T,\,A\in(0,\infty)$, $X\in[0,\infty)$ and $\bz\in(1,\infty)$. Then,
\begin{eqnarray*}
\cj:=\int_0^a \frac1{\sz^\bz} \exp^\ast\lf(-\frac{T^2}{\sz}\r)
\exp^\ast\lf(-\frac{A^2}{\sz}\r)\,\exp^\ast\lf(-\sz X^2\r)\,d\sz
\le M\,\frac {\exp^\ast\lf(- \frac{AT}{a}\r)\,\exp^\ast\lf(-
T\,X\r)}{(T^2+A^2)^{\bz-1}} ,
\end{eqnarray*}
where $M>0$ is  independent of $a,\, T,\,A$ and $X$.
\end{lem}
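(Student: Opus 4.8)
The plan is to estimate the integrand by absorbing the various $\exp^\ast$ factors in a way that exploits the interplay between the negative powers of $\sigma$ on $[0,a]$ and the exponential decay. First I would note that the factor $\exp^\ast(-\sigma X^2)$ is bounded by $1$, but that is wasteful near $\sigma = a$; instead I will keep it and pair it against $\exp^\ast(-T^2/\sigma)$ via the elementary inequality $\exp^\ast(-T^2/\sigma)\exp^\ast(-\sigma X^2)\le \exp^\ast(-TX)$, which follows from $T^2/\sigma + \sigma X^2 \ge 2TX$ (AM--GM) and the convention that allows the constant $c$ in $\exp^\ast$ to change. After using up only ``half'' of the $T^2/\sigma$-factor for this (recall the remark that $\exp^\ast(-Y)\lesssim\exp^\ast(-Y)\exp^\ast(-Y)$), I still have a factor $\exp^\ast(-T^2/\sigma)\exp^\ast(-A^2/\sigma) = \exp^\ast(-(T^2+A^2)/\sigma)$ to play with, together with the polynomial factor $\sigma^{-\beta}$.

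This reduces matters to showing
\[
\int_0^a \frac{1}{\sigma^\beta}\,\exp^\ast\!\lf(-\frac{T^2+A^2}{\sigma}\r)\,\exp^\ast\!\lf(-\frac{A^2}{\sigma}\r)\,d\sigma \lesssim \frac{\exp^\ast(-AT/a)}{(T^2+A^2)^{\beta-1}},
\]
where I have again split off a spare copy of $\exp^\ast(-A^2/\sigma)$. Write $R^2:=T^2+A^2$. For the main factor I substitute $\sigma = R^2/u$, so $d\sigma = -R^2 u^{-2}\,du$ and the integral becomes $R^{2-2\beta}\int_{R^2/a}^\infty u^{\beta-2}\exp^\ast(-u)\exp^\ast(-A^2 u/R^2)\,du$. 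Dropping the lower limit to $0$ and the extra factor to $1$ gives the bound $C\,R^{2-2\beta} = C\,(T^2+A^2)^{1-\beta}$ since $\int_0^\infty u^{\beta-2}e^{-cu}\,du<\infty$ for $\beta>1$; this already yields the claim when $AT/a \lesssim 1$. When $AT/a$ is large, I instead keep the leftover factor $\exp^\ast(-A^2/\sigma)$ on $[0,a]$ and combine it with the other leftover $\exp^\ast(-T^2/\sigma)$ to get $\exp^\ast(-R^2/\sigma)\le \exp^\ast(-R^2/a)$ on the whole interval; but a cleaner route is to extract, before substituting, a factor $\exp^\ast(-A^2/(2\sigma) - T^2/(2\sigma))$ and bound it on $[0,a]$ by $\exp^\ast(-AT/a)$ using $A^2/(2\sigma)+T^2/(2\sigma)\ge AT/\sigma\ge AT/a$, leaving a convergent integral of the same type for the remaining $\exp^\ast$ factors. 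Either way the two regimes combine to the stated bound.

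The only mildly delicate point — and the step I expect to require the most care — is bookkeeping the constants: each time I invoke an AM--GM-type inequality such as $Y_1 + Y_2 \ge 2\sqrt{Y_1 Y_2}$ inside an $\exp^\ast$, I must make sure I have reserved enough ``spare'' exponential factors (via $\exp^\ast(-Y)\lesssim \exp^\ast(-Y)\exp^\ast(-Y)$) so that no single $\exp^\ast$ is used twice, and that the final constant $M$ genuinely depends only on $\beta$ (hence, in the paper's context, only on $n$ and $\alpha$) and not on $a,T,A,X$. Since $\beta\in(1,\infty)$ is the only parameter entering the convergent Gamma-type integral $\int_0^\infty u^{\beta-2}e^{-cu}\,du$, and in the application $\beta$ takes finitely many fixed values, this dependence is harmless. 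No continuity or measurability subtleties arise, as the integrand is manifestly positive and locally bounded on $(0,a]$ with an integrable singularity at $0$ supplied by the exponential factors.
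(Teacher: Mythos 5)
Your proposal is correct and follows essentially the same route as the paper: the AM--GM bound $\exp^\ast(-T^2/\sigma)\exp^\ast(-\sigma X^2)\lesssim\exp^\ast(-TX)$, the change of variable $u=(T^2+A^2)/\sigma$, and the use of $\beta>1$ together with $(T^2+A^2)/a\ge 2AT/a$ to extract $\exp^\ast(-AT/a)$. The paper realizes your ``cleaner route'' in a single step by letting the lower limit $(T^2+A^2)/a$ of the transformed integral carry the factor $\exp^\ast(-AT/a)$, so no case split on the size of $AT/a$ is needed.
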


\begin{proof}
Notice that $\exp^\ast\lf(-\frac{T^2}{\sz}\r)
\,\exp^\ast\lf(-\sz X^2\r) \ls \exp^\ast\lf(-T\,X\r) $.
Via a change of variable $u= (T^2+A^2)/\sz$, we see that
\begin{eqnarray*}
\cj&&\ls \exp^\ast\lf(-T\,X\r)\,\int_0^a \frac1{\sz^\bz}
\exp^\ast\lf(-\frac{T^2}{\sz}\r)
\exp^\ast\lf(-\frac{A^2}{\sz}\r)\,\,d\sz\\
&&
\ls  \exp^\ast\lf(-T\,X\r)\,\frac{1}{(T^2+A^2)^{\bz-1}}
\int_{\frac {T^2+A^2} a}^\infty u^{\bz-2}\exp^\ast\lf(-u\r)\,du.
\end{eqnarray*}
Since $ (T^2+A^2)/a\ge  2A\,T/{a}$ and $\bz>1$,
the last integral is controlled by
$\exp^\ast\lf(- {AT}/{a}\r)$.
\end{proof}

\begin{prop}\label{p2.1}
For all $x\in\rn$ and $t>0$,  the $K_i$ from  Theorem \ref{t1.2} satisfy
\begin{eqnarray}\label{eq:large}
\int_\rn \lf[K_1(t,x,y)+K_2(t,x,y)\r]\,dy\le C
\end{eqnarray}
and
\begin{eqnarray}\label{eq:small}
\int_\rn \lf[K_3(t,x,y)+K_4(t,x,y)\r]\,dy\le C \min\{1,\,t\}.
\end{eqnarray}
\end{prop}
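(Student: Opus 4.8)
The plan is to estimate the four integrals separately, in each case by reducing to a one-dimensional model integral and using Lemma \ref{lem-xx} where an $s$-type parameter is hidden inside the kernel.

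\begin{proof}[Proof plan]
\textbf{The term $K_1$.} Since $\exp^\ast(-t(1+|x|))\le 1$, it suffices to bound $\int_\rn \frac{t}{(t^2+|x-y|^2)^{(n+1)/2}}\,dy$. Translating $y\mapsto y-x$ and then scaling $y\mapsto ty$, this integral equals $\int_\rn (1+|y|^2)^{-(n+1)/2}\,dy$, a finite constant depending only on $n$. This gives the $K_1$ part of \eqref{eq:large}.

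\textbf{The term $K_2$.} Here I would use rotation invariance to assume $x=(x_1,0,\dots,0)$ with $x_1>1$, and write $y=(y_1,y')$, so $y_x=(y_1,0,\dots,0)$ and $y_x'=(0,y')$. On the support of the characteristic function, $x_1/2\le y_1<x_1$, so $|x|/|y_x|\simeq 1$ and the prefactor $t/|x|\simeq t/y_1$. The $y'$-integral of $\bigl(t^2+\frac{x_1-y_1}{x_1}+|y'|^2\bigr)^{-(n+2)/2}\exp^\ast(-|y'|^2 x_1/(x_1-y_1))$ is, after setting $\tau^2:=t^2+(x_1-y_1)/x_1$, handled by splitting according to whether $|y'|\le\tau$ or not and using the exponential for the tail; one obtains a bound $\ls \tau^{-3}\,\exp^\ast(-t^2 x_1/(x_1-y_1))$ uniformly, using $\tau^2\ge (x_1-y_1)/x_1$ to absorb the factor $x_1/(x_1-y_1)$. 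It then remains to bound
\[
\frac{t}{x_1}\int_{x_1/2}^{x_1}\Bigl(t^2+\frac{x_1-y_1}{x_1}\Bigr)^{-3/2}\exp^\ast\!\Bigl(-\frac{t^2 x_1}{x_1-y_1}\Bigr)\,dy_1.
\]
Substituting $\sigma=(x_1-y_1)/x_1\in(0,1/2)$ turns $dy_1$ into $x_1\,d\sigma$, cancelling the $x_1$, and the integral becomes $t\int_0^{1/2}(t^2+\sigma)^{-3/2}\exp^\ast(-t^2/\sigma)\,d\sigma$, which is $\le t\int_0^\infty \sigma^{-3/2}\exp^\ast(-t^2/\sigma)\,d\sigma\simeq t\cdot t^{-1}=C$ by the scaling $\sigma\mapsto t^2\sigma$. (Alternatively this is exactly Lemma \ref{lem-xx} with $\beta=3/2$, $A=0$, $X=0$, $T=t$, $a=1/2$, modulo the harmless $t^2$ inside the power.) This proves \eqref{eq:large}.

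\textbf{The terms $K_3,K_4$.} For $K_3$, $\int_\rn \min(1,t)\exp^\ast(-|y|^2)\,dy = \min(1,t)\int_\rn\exp^\ast(-|y|^2)\,dy\simeq\min\{1,t\}$ directly. For $K_4$, again take $x=(x_1,0,\dots,0)$, $y=(y_1,y')$; the $y'$-integral of $\exp^\ast(-|y'|^2)$ is a constant, leaving
\[
t\int_1^{x_1/2}\frac1{y_1}\Bigl(\log\frac{x_1}{y_1}\Bigr)^{-3/2}\exp^\ast\!\Bigl(-\frac{t^2}{\log(x_1/y_1)}\Bigr)\,dy_1.
\]
The substitution $u=\log(x_1/y_1)$ gives $dy_1/y_1=-du$ and the range $u\in(\log 2,\log x_1)$, so the integral is $\le t\int_0^\infty u^{-3/2}\exp^\ast(-t^2/u)\,du\simeq t\cdot t^{-1}=C$, again by scaling $u\mapsto t^2 u$ (or Lemma \ref{lem-xx} with $\beta=3/2$). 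To get the sharper bound $\min\{1,t\}$ rather than just $C$, note that when $t\le 1$ one has $u^{-3/2}\exp^\ast(-t^2/u)\ls t\,(t^2+u)^{-3/2}\cdot\ldots$; more simply, split the $u$-integral at $u=1$: on $u\le 1$ use $u^{-3/2}\exp^\ast(-t^2/u)\le u^{-3/2}$ is not integrable, so instead bound $t\,u^{-3/2}\exp^\ast(-t^2/u)$ by $\exp^\ast(-t^2/u)\cdot t u^{-3/2}$ and use $tu^{-1}\le t^2u^{-2}+1/4$... The clean route is: for $t\le 1$, $t\int_0^\infty u^{-3/2}\exp^\ast(-t^2/u)\,du= t\cdot C/t = C$, which is $\simeq\min\{1,t\}$ only up to the factor; to recover the $t$ we keep one copy of $t$ outside and observe $t^{-1}\exp^\ast(-t^2/u)\le C$ fails, so instead: the honest bound is $t\int_{\log 2}^\infty u^{-3/2}\exp^\ast(-t^2/u)\,du$, and since $u\ge\log 2$ here, $u^{-3/2}\le C$ and $\exp^\ast(-t^2/u)\le 1$, giving... this diverges, so one must use the decay. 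The correct observation: $\int_{\log 2}^\infty u^{-3/2}\,du<\infty$, hence the $K_4$-integral is $\le C t\le C\min\{1,t\}$ when $t\le 1$; and when $t\ge 1$, $\exp^\ast(-t^2/u)\le\exp^\ast(-t^2/u)$ and the same substitution plus $\int u^{-3/2}\exp^\ast(-t^2/u)\,du\le Ct^{-1}$ gives $\le C\le C\min\{1,t\}$. Combining, \eqref{eq:small} holds.

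\textbf{Main obstacle.} The only genuinely delicate point is the $K_2$ estimate: one must perform the $y'$-integration uniformly in the remaining parameters and verify that the factor $x_1$ in the $y'$-exponent is exactly absorbed by the lower bound $t^2+(x_1-y_1)/x_1$ coming from the other variables, so that after the substitution $\sigma=(x_1-y_1)/x_1$ the prefactor $t/x_1$ combines with $dy_1=x_1\,d\sigma$ to leave a clean, scale-invariant one-dimensional integral. Everything else is routine scaling or a direct application of Lemma \ref{lem-xx}.
\end{proof}
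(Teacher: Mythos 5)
Your proof is correct and follows essentially the same route as the paper: bound $K_1$ by the standard Poisson kernel, rotate to put $x$ on the first axis, integrate out $y'$ first for $K_2$ and $K_4$, and reduce to one–dimensional scaling integrals via $\sigma=(x_1-y_1)/x_1$ and $u=\log(x_1/y_1)$. Two small remarks. For $K_2$ you carry the $\exp^\ast$ factor through the whole computation; the paper simply drops it immediately (it is $\le 1$) and integrates $\frac{t}{x_1}(t^2+|x_1-y_1|/x_1)^{-3/2}$ over $y_1\in\rr$, which is cleaner and avoids the discussion about absorbing $x_1/(x_1-y_1)$. For $K_4$ your exposition contains several visible false starts before you land on the decisive observation, which is also the paper's: the lower limit $u\ge\log 2$ (coming from $y_1<x_1/2$) makes $\int_{\log 2}^\infty u^{-3/2}\,du$ finite, giving the factor $t$ for $t\le1$, while for $t\ge 1$ the scaling $u\mapsto t^2u$ gives the constant bound. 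The final argument you state is right; the discarded attempts should simply be deleted.
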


\begin{proof}
Since $K_1$ is dominated by the standard Poisson kernel, it follows that
$\int_\rn K_1(t,x,y)\,dy\ls1.$ Also, it is obvious that
$\int_\rn K_3(t,x,y)\,dy\ls \min\{1,t\}.$

For the estimates of $K_2$ and $K_4$, we can make a rotation and assume that
$x=(x_1,0,\dots,0)$ with $x_1>1$ and write $y=(y_1,y')$. Then
\begin{eqnarray}\label{p2.1-eq1}
\int_\rn K_2(t,x,y)\,dy
&&\ls  \int_\rr \int_{\rr^{n-1}}\frac t {x_1}
\left(t^2  +  \frac{|x_1-y_1|}{x_1}+  |y'|^2\right)^{ -\frac{n+2}2}\,dy'\,dy_1\\
&&\ls \int_\rr\frac t {x_1}
\left(t^2  +  \frac{|x_1-y_1|}{x_1}\right)^{ -\frac{3}2}\, dy_1 \notag\\
&&\ls 1, \notag
\end{eqnarray}
and \eqref{eq:large} is proved.

In the case of $K_4$, we have
\begin{eqnarray*}
\int_\rn K_4(t,x,y)\,dy
&& \ls  \int_{1<y_1<x_1/2} \int_{\rr^{n-1}}
\frac t{y_1} \left(\log\frac {x_1} {y_1}\right)^{ -\frac32}  \,
\exp^\ast\left(-\frac {t^2}{\log\frac  {x_1} {y_1}}\right) \,
 \exp^\ast(-|y'|^2)\,dy'\,dy_1\\
 &&\simeq  \int_{1<y_1<x_1/2}
\frac t{y_1} \left(\log\frac {x_1} {y_1}\right)^{ -\frac32}  \,
\exp^\ast\left(-\frac {t^2}{\log\frac  {x_1} {y_1}}\right) \,
 \,dy_1\\
 &&\ls \int_{\log 2}^\infty
 \frac t{\sqrt \tau}
\exp^\ast\left(-\frac {t^2}{\tau}\right) \,
 \,\frac{d\tau}{
 \tau}\\
 &&\ls\min\{1,\;t\}.
\end{eqnarray*}
This proves \eqref{eq:small}.
\end{proof}

%%%%%%%%%%%%%%%%%%%%%%%%%%%%%%%%%%%%%%%%%%%%%%%%%%%%%%%%%%%%%

\section{Proof of Theorem \ref{t1.1}}\label{s3}

\qquad In this section, we assume Theorems \ref{t1.2}, \ref{t1.3}
and \ref{t1.4} and prove Theorem \ref{t1.1}.
Combining Proposition \ref{p2.1}
with the pointwise estimates for the $x$ derivatives of the Poisson kernel
in Theorems \ref{t1.3} and \ref{t1.4},
we first deduce bounds for the $L^1$ norms of those derivatives.

\begin{prop}\label{p3.1x}
\begin{enumerate}
\item[\rm(i)] For all $i\in\{1,2,\dots,n\}$, $t>0$ and $x\in\rn$,
\begin{eqnarray}\label{p2.2-eq1}
\int_\rn |\partial_{x_i} P_t(x,y)|\,dy
\le C t^{-1}.
\end{eqnarray}
\item[\rm (ii)] For all $t>0$ and $x=(x_1,0,\dots,0)\in\rn$ with $x_1\ge0$,
\begin{eqnarray}\label{p2.2-eq2}
\int_\rn |\partial_{x_1} P_t(x,y)|\,dy
\le C  t^{-2}(1+x_1)^{-1}.
\end{eqnarray}
\end{enumerate}
\end{prop}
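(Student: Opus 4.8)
The plan is to combine the pointwise kernel bounds of Theorems~\ref{t1.3} and~\ref{t1.4} with the $L^1$ estimates already established in Proposition~\ref{p2.1}; both parts then reduce to elementary one-variable integrals, and no genuinely new difficulty arises.

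\textbf{Part (i).} By Theorem~\ref{t1.3}, $|\partial_{x_i}P_t(x,y)|=t^{-1}|t\,\partial_{x_i}P_t(x,y)|\le Ct^{-1}[K_1+K_2+K_3+K_4](t,x,y)$. Integrating in $y$ and using \eqref{eq:large} and \eqref{eq:small} gives $\int_\rn|\partial_{x_i}P_t(x,y)|\,dy\le Ct^{-1}(1+\min\{1,t\})\le Ct^{-1}$, which is \eqref{p2.2-eq1}. This step is immediate.

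\textbf{Part (ii).} By Theorem~\ref{t1.4} it suffices to show $\int_\rn Z_j(t,x,y)\,dy\le Ct^{-2}(1+x_1)^{-1}$ for $j=1,2,3,4$, where $x=(x_1,0,\dots,0)$ with $x_1\ge 0$. For $Z_1$, the dilation $z=(x-y)/t$ yields $\int_\rn t\,(t^2+|x-y|^2)^{-(n+2)/2}\,dy=Ct^{-1}$, and the factor $\exp^\ast(-t(1+x_1))$ supplies the remaining power via the elementary bound $se^{-cs}\le C$ with $s=t(1+x_1)$. For $Z_2$, which is supported in $\{x_1>1\}$, integrating first in $y'\in\rr^{n-1}$ lowers the exponent, $\int_{\rr^{n-1}}(a+|y'|^2)^{-(n+4)/2}\,dy'=Ca^{-5/2}$ with $a=t^2+(x_1-y_1)/x_1$, and then the substitution $u=(x_1-y_1)/x_1$ gives $\int_\rn Z_2\,dy\le Ct\,x_1^{-1}\int_0^{1/2}(t^2+u)^{-5/2}\,du\le Ct\,x_1^{-1}t^{-3}$, which is $\le Ct^{-2}(1+x_1)^{-1}$ since $1+x_1\simeq x_1$ on the support. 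For $Z_3$, one simply has $\int_\rn Z_3\,dy=C\min\{t,t^{-2}\}(1+x_1)^{-1}\le Ct^{-2}(1+x_1)^{-1}$. For $Z_4$, which is supported in $\{x_1>2\}$, the $y'$ integral is $O(1)$, and the substitutions $\tau=\log(x_1/y_1)$ followed by $v=t^2/\tau$ — exactly as in the $K_4$ computation in the proof of Proposition~\ref{p2.1}, but with the extra factor $x_1^{-1}$ and the power $(\log\tfrac{x_1}{y_1})^{-5/2}$ — give $\int_\rn Z_4\,dy\le Ct\,x_1^{-1}\int_{\log 2}^{\infty}\tau^{-5/2}\exp^\ast(-t^2/\tau)\,d\tau=Ct\,x_1^{-1}t^{-3}\int_0^{t^2/\log 2}v^{1/2}\exp^\ast(-v)\,dv\le Ct^{-2}(1+x_1)^{-1}$, again using $1+x_1\simeq x_1$. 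Summing the four bounds completes Part (ii).

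\textbf{Main obstacle.} There is essentially none, granted Theorems~\ref{t1.3} and~\ref{t1.4} and Proposition~\ref{p2.1}: the integrals above are routine, and the only points needing a little attention are that the exponents in the $Z_j$ are shifted by one relative to those in the $K_j$ (which matches the extra power of $t^{-1}$ or $x_1^{-1}$ that is gained), and that $1+x_1\simeq x_1$ on the supports of $Z_2$ and $Z_4$ (both of which vanish when $x_1\le 1$, so the claimed bound is trivially true there). The substantive work of this circle of ideas lies in the pointwise kernel estimates themselves, which are taken as given here.
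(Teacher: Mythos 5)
Your proof is correct and follows essentially the same route as the paper: part (i) is exactly their observation that Theorem~\ref{t1.3} plus Proposition~\ref{p2.1} gives the bound, and part (ii) is the same term-by-term integration of $Z_1,\dots,Z_4$ with the same substitutions. The only cosmetic difference is in the $Z_2$ estimate, where you drop the exponential factor and integrate the algebraic tail $(a+|y'|^2)^{-(n+4)/2}$ in $y'$, whereas the paper keeps the factor $\exp^\ast(-(t^2+|y'|^2)x_1/(x_1-y_1))$ to perform that integration; both yield the lowered exponent $5/2$ and then the same one-variable estimate in $y_1$.
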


\begin{proof}
Notice that  (i)
follows from Theorem \ref{t1.3} and Proposition \ref{p2.1}.

To prove (ii), we have,
with $Z_1, Z_2, Z_3, Z_4$  as in Theorem \ref{t1.4},
$$|\partial_{x_1} P_t(x,y)|\ls Z_1(t,x,y)+Z_2(t,x,y)+Z_3(t,x,y)+Z_4(t,x,y).$$
It is easy to see that
\begin{eqnarray*}
\int_\rn  Z_1(t,x,y)\,dy
&&\ls   \int_\rn \frac {t\,\exp^\ast(-t(1+|x|))}{(t^2+|x-y|^2)^{(n+2)/2}}\,dy
\ls t^{-1} \exp^\ast(-t(1+|x|)) \ls  t^{-2}(1+x_1)^{-1}
\end{eqnarray*}
and
\begin{eqnarray*}
\int_\rn  Z_3(t,x,y)\,dy
&&\simeq   \int_\rn \frac{\min\{t, \;t^{-2}\}}{1+|x|}\exp^\ast(-|y|^2)\,dy
\ls \frac{\min\{t, \;t^{-2}\}}{1+|x|} \ls  t^{-2}(1+x_1)^{-1}.
\end{eqnarray*}
Integrating $Z_2$ first  in $y' $ and then in $y_1$, we get
\begin{eqnarray*}
\int_\rn  Z_2(t,x,y)\,dy
&&\ls    \frac{t}{x_1^2} \int_{x_1/2}^{x_1}
 \int_{\rr^{n-1}}
\lf(\frac{x_1}{x_1-y_1}\r)^{(n+4)/2}
\exp^\ast\lf(-\frac{(t^2+|y' |^2)x_1}{x_1-y_1}\r)\,dy'\,dy_1\\
&&\ls \frac{t}{x_1^2} \int_{x_1/2}^{x_1}
\lf(\frac{x_1}{x_1-y_1}\r)^{5/2}
\exp^\ast\lf(-\frac{t^2x_1}{x_1-y_1}\r)\,dy_1\\
&&\ls  t^{-2}x_1^{-1}.
\end{eqnarray*}
Similarly,
\begin{eqnarray*}
\int_\rn  Z_4(t,x,y)\,dy
&&\ls    \frac t{x_1}\int_{0}^{x_1/2}
 \lf(\log\frac {x_1} {y_1}\r)^{-5/2}  \,
\exp^\ast\left(-\frac {t^2}{\log\frac  {x_1} {y_1}}\right) \,
 \,\frac{dy_1}{y_1}\\
&&\ls \frac t{x_1} \int_{\log 2}^\infty u^{-5/2}
\exp^\ast\lf(-\frac{t^2}{u}\r)\,du\\
&&\ls  t^{-2}x_1^{-1},
\end{eqnarray*}
where  $u= \log\frac{x_1}{y_1}$.
Combining these estimates and noticing that $Z_2$ and $Z_4$ are non-zero only if
$x_1>1$, we obtain
\eqref{p2.2-eq2}.
\end{proof}

From this proposition,
we  deduce  two pointwise bounds for the
$x$ derivatives of $P_tf$, with $f$  a Gaussian Lipschitz function.

\begin{prop}\label{p2.3}
Let $\az\in(0,1)$ and $f\in \glip_\az$ with norm $1$.
\begin{enumerate}
\item[\rm (i)] For all $i\in\{1,2,\dots,n\}$, $t>0$ and $x\in\rn$,
\begin{eqnarray}\label{p2.3-eq1}
|\partial_{x_i} P_tf(x)|
\le C t^{\az-1}.
\end{eqnarray}
\item[\rm (ii)] For all $t>0$ and $x=(x_1,0,\dots,0)\in\rn$ with $x_1>0$,
\begin{eqnarray}\label{p2.3-eq2}
|\partial_{x_1} P_tf(x)|
\le C t^{\az-2}(1+x_1)^{-1}.
\end{eqnarray}
\end{enumerate}
\end{prop}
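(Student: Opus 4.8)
The plan is to exploit the key fact that $\partial_{x_i} P_t f(x) = \int_\rn \partial_{x_i} P_t(x,y)\, f(y)\,dy$, together with the observation that $\int_\rn \partial_{x_i} P_t(x,y)\,dy = \partial_{x_i}\big(\int_\rn P_t(x,y)\,dy\big) = \partial_{x_i}(1) = 0$, so that we may subtract a constant from $f$ without changing the integral. First I would write
\begin{eqnarray*}
\partial_{x_i} P_t f(x) = \int_\rn \partial_{x_i} P_t(x,y)\,\big(f(y)-f(x)\big)\,dy,
\end{eqnarray*}
and then use the second ingredient, namely the Lipschitz-type bound for $f\in\glip_\az$. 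By Lemma \ref{lem-x} (applied after the correction of $f$ on a null set, via Theorem \ref{t1.1}), the norm-$1$ hypothesis gives a bound on $|f(x)-f(y)|$ by a constant times the minimum in \eqref{strong-lip-2} or \eqref{strong-lip}. For part (i), the crude half of this, $|f(x)-f(y)|\lesssim |x-y|^\az$, combined with the $L^1$ bound $\int_\rn |\partial_{x_i}P_t(x,y)|\,dy\lesssim t^{-1}$ from Proposition \ref{p3.1x}(i), is not quite enough on its own because it would produce $\int |\partial_{x_i}P_t(x,y)|\,|x-y|^\az\,dy$; so I would instead go back to the pointwise kernel estimates of Theorem \ref{t1.3} and, for each of $K_1,\dots,K_4$, estimate $\int_\rn K_j(t,x,y)\,|x-y|^\az\,dy$ directly, checking it is $\lesssim t^{\az-1}$. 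For $K_1$ this is the classical computation for the standard Poisson kernel (the factor $|x-y|^\az$ costs $t^\az$ relative to the $L^1$ norm $t^{-1}$). For $K_3$ and $K_4$, which are supported where $|y|$ or $|y_x|$ is small, one uses $\min(1,t)\lesssim t$ for $t\le 1$ and, for $t>1$, the rapidly decaying $\exp^*$ factors in $t$ to absorb any polynomial; here one must also note $|x-y|\lesssim |x|$ on the relevant supports and use the decay in $|x|$. For $K_2$, rescale as in the proof of Proposition \ref{p2.1}: the substitution that flattens the $(t^2 + |x-y_x|/|x| + |y'_x|^2)$ factor shows the $|x-y|^\az$ weight contributes at most $(t^2|x|)^\az \lesssim$ (something) $\cdot t^{2\az}$, and combined with the $\exp^*(-t(1+|x|))$–type control one again lands on $t^{\az-1}$. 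I expect the $K_2$ term to be the fiddliest, since it has genuinely different scalings in the radial and orthogonal directions.

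For part (ii), I would follow the same scheme but with the sharper radial-derivative estimate of Theorem \ref{t1.4}, decomposing $\partial_{x_1}P_t(x,y) = \sum_{j=1}^4 Z_j(t,x,y)$, and again inserting $f(y)-f(x)$ and using the Lipschitz bound. The point is that Proposition \ref{p3.1x}(ii) already gives $\int |\partial_{x_1}P_t(x,y)|\,dy\lesssim t^{-2}(1+x_1)^{-1}$, which is $t^{-1}(1+x_1)^{-1}$ better in $t$ than part (i); to gain the extra $t^\az$ I again estimate $\int Z_j(t,x,y)\,|x-y|^\az\,dy$ for each $j$. For $Z_1$ and $Z_3$ this is immediate from the $L^1$ computations already carried out in the proof of Proposition \ref{p3.1x}(ii), simply carrying along an extra $|x-y|^\az\lesssim (\text{natural length scale})^\az$: for $Z_1$ the length scale is $t$, giving $t^\az \cdot t^{-2}(1+x_1)^{-1}$; for $Z_3$ one has $|x-y|\lesssim 1+|x|$ modulo the Gaussian tail, costing at most $(1+x_1)^\az$, which is harmless since $\alpha<1$ and—wait, that would overshoot; more carefully, on the support of $Z_3$ one uses $|x-y|^\az \lesssim (1+|x|)^\az \exp^*(\cdot)$ only after splitting $|x-y|\le |x|+|y|$ and absorbing $|y|^\az$ into $\exp^*(-|y|^2)$, while the $|x|^\az$ part is absorbed using $\min\{t,t^{-2}\}/(1+|x|)$ together with the fact that, as will be clear from the proof, there is actually extra decay in $|x|$ available, or else one simply notes that for the \emph{pointwise} estimate \eqref{p2.3-eq2} a factor $(1+x_1)^{\az}$ instead of $(1+x_1)^{-1}$ is acceptable only if—hmm, this needs care, see below. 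For $Z_2$ and $Z_4$, the same rescalings as in Proposition \ref{p3.1x}(ii) apply; the radial displacement length scale for $Z_2$ is of order $t^2 x_1$, and on its support $|x-y|\lesssim |x_1-y_1|+|y'| \lesssim x_1$, so the $|x-y|^\az$ weight costs at most $x_1^\az$, which must be shown to be absorbed by the gain in the integral.

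The step I expect to be the main obstacle is precisely controlling the power of $(1+x_1)$ (equivalently $(1+|x|)$) in the terms $K_3,K_4$ (resp. $Z_3,Z_4$) after inserting the weight $|x-y|^\az$: naively $|x-y|^\az$ could be as large as $(1+|x|)^\az$, which would destroy the $(1+x_1)^{-1}$ in \eqref{p2.3-eq2} and even the $t^{\az-1}$ cleanliness in \eqref{p2.3-eq1}. The resolution is that on the supports of these terms one has $|y|\lesssim 1$ (for $K_3$) or $|y_x|\le|x|/2$ with the orthogonal part Gaussian-small (for $K_4$), so one should write $|x-y|^\az\le C(|x|^\az+|y|^\az)$, absorb $|y|^\az$ trivially, and for the $|x|^\az$ piece observe that the $\exp^*$ factors in $|x|$ that appear in $K_1,\dots,K_4$ (via $t(1+|x|)$, via $(t^2+|y'_x|^2)|x|/|x-y_x|$, etc.) or the logarithmic factors in $K_4$ supply enough room. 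Concretely, for $K_4$ one has $|x|^\az = (|x|/|y_x|)^\az |y_x|^\az$ and $(|x|/|y_x|)^\az = e^{\az\log(|x|/|y_x|)} = e^{\az u}$ in the variable $u=\log(|x|/|y_x|)$ of Proposition \ref{p2.1}'s computation, which is killed by the $\exp^*(-t^2/u)$ for $t$ bounded below and by the $u^{-3/2}$ polynomial factor together with convergence of $\int u^{-3/2}e^{\az u}\cdot(\cdots)$—here one must be slightly careful and pair $e^{\az u}$ against a piece of $\exp^*(-|y'_x|^2)$ is not available, so instead one uses that for the regime $t\le 1$ the bare $t$ in front already gives $t^{\az-1}\ge t$, while for $t>1$ one borrows from $\exp^*(-t^2/u)$ by noting $t^2/u \gtrsim t^2/u$ and... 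This bookkeeping, matching each stray polynomial or exponential in $|x|$ against the corresponding decay already present in $K_j$/$Z_j$, is the technical heart of the argument; everything else is a rerun of the computations in Propositions \ref{p2.1} and \ref{p3.1x} with an extra harmless weight $|x-y|^\az$ carried through.
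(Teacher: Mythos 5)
Your proposal is circular. Proposition \ref{p2.3} is a tool the paper uses to prove the implication (i) $\Rightarrow$ (ii) of Theorem \ref{t1.1}; the only hypothesis is $f\in\glip_\az$, i.e.\ the semigroup condition $\|\partial_tP_tf\|_{L^\infty}\le t^{\az-1}$. Your very first step invokes the Lipschitz bound \eqref{strong-lip} on $|f(y)-f(x)|$, citing Lemma~\ref{lem-x} ``via Theorem~\ref{t1.1}'' --- but that Lipschitz bound \emph{is} the conclusion (ii) of Theorem~\ref{t1.1}, which at this point has not been established and whose proof in Section~\ref{s3} relies precisely on this proposition. You cannot assume it here.

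The paper's proof stays within the $\glip_\az$ hypothesis by using the semigroup to transfer the $x$-derivative onto the kernel while keeping the $t$-derivative on $P_tf$. Differentiating the identity $P_{s+t}f=P_s(P_tf)$ gives
\begin{equation*}
\partial_{x_i}\partial_tP_{s+t}f(x)=\int_\rn \partial_{x_i}P_s(x,y)\,\partial_tP_tf(y)\,dy,
\end{equation*}
and setting $s=t$, then applying the $L^1$ kernel bound of Proposition~\ref{p3.1x}(i) together with the defining property of $\glip_\az$, yields
\begin{equation*}
|\partial_{x_i}\partial_tP_{2t}f(x)|\lesssim \|\partial_{x_i}P_t(x,\cdot)\|_{L^1}\,\|\partial_tP_tf\|_{L^\infty}\lesssim t^{-1}\cdot t^{\az-1}=t^{\az-2}.
\end{equation*}
Since $\partial_{x_i}P_tf(x)\to 0$ as $t\to\infty$ (by the $L^1$ bound and boundedness of $f$), one integrates this from $t$ to $\infty$ to get (i). Part (ii) is the same argument with $t^{-2}(1+x_1)^{-1}$ from Proposition~\ref{p3.1x}(ii) in place of $t^{-1}$. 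The only ingredients are the kernel $L^1$ bounds and the $\glip_\az$ condition, so no circularity arises.

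Separately, the computation you sketch also runs into the difficulties you yourself flag (``that would overshoot,'' ``hmm, this needs care, see below'' with no ``below''): on the support of $Z_3$ the weight $|x-y|^\az$ can be of size $(1+|x|)^\az$, and $Z_3$ decays only like $(1+|x|)^{-1}$ in $|x|$ (not exponentially), so this extra power is not absorbed and the claimed bound $t^{\az-2}(1+x_1)^{-1}$ is not reached by that route. But these secondary problems are moot: the approach must be abandoned because of the circularity.
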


\begin{proof}
To prove (i),
we use the semigroup property of
the Poisson integral and take derivatives,  obtaining
\[
\partial_{x_i}\partial_{t}P_{s+t}f(x)
= \partial_{x_i}\partial_{t} \int_\rn P_s(x,y)P_tf(y)\,dy
=\int_\rn \partial_{x_i}P_s(x,y)\,\partial_{t}P_tf(y)\,dy
\]
for  $s,t>0$ and $x\in\rn$. Now let $s=t$, to get
\[
\frac12 \partial_{x_i}\partial_tP_{2t}f(x)
= \int_\rn \partial_{x_i} P_{t}(x,y)\,
\partial_t P_{t}f(y)\,dy.
\]
By \eqref{p2.2-eq1} and the definition of  $\glip_\alpha$, this
implies that for all $t>0$,
\begin{eqnarray}\label{p2.3-eq3}
\left|\partial_{x_i}\partial_tP_{t}f(x)\right| \lesssim t^{\az-2}.
\end{eqnarray}
Since $f$ is bounded, it follows from \eqref{p2.2-eq1} that
$\partial_{x_i}P_tf(x)\to 0$ as
$t\to \infty$. Thus
\[
\partial_{x_i}P_tf(x)
= -  \int_t^\infty
\partial_{x_i}\partial_\tau P_\tau f(x)\,d\tau,
\]
and (i) is a consequence of this and the preceding inequality.

We prove (ii) by a similar argument, using now \eqref{p2.2-eq2}.
The only difference is that \eqref{p2.3-eq3} is replaced
by
$\left|\partial_{x_1}\partial_tP_{t}f(x)\right| \lesssim t^{\az-3}(1+x_1)^{-1}.$
\end{proof}

\begin{proof}[Proof of Theorem \ref{t1.1}\,]
To prove  that (i) implies (ii), we let $f\in\glip_\az$ with norm $1$
and verify  \eqref{strong-lip}, using Lemma \ref{lem-x}. We start
by modifying $f$ on a null set.
Since $f\in L^\infty(\rn)$ and $\{P_t\}_{t>0}$
is a semigroup to which the Littlewood-Paley-Stein theory applies (see Stein
\cite{S2}), we know that $P_tf(x)\to f(x)$ as $t\to0$ for almost all $x\in\rn$.
For each $t>0$ and all $x\in\rn$, one has
$$P_tf(x)=P_1f(x)-\int_t^1 \partial_\tau P_\tau f(x)\,d\tau,$$
and this integral  has a limit as $t\to 0$ for all $x$.
We  define $f(x)$ as $P_1f(x)-\int_0^1 \partial_\tau P_\tau f(x)\,d\tau$
for all $x\in\rn$.

Let $x,\,y\in\rn$. For any $t>0$, one has
\begin{eqnarray}\label{diff}
|f(x)-f(y)|\le  |f(x)-P_{t}f(x)|+\lf| P_{t}f(x)-P_{t}f(y)\r|+
\lf|P_{t}f(y)-f(y)\r|.
 \end{eqnarray}
Writing the first difference to the right here as an integral
and applying  the definition of $\mathrm{GLip}_\az$, we see that
\begin{eqnarray*}
|f(x)-P_{t}f(x)|
= \lf|\int_0^{t} \partial_\tau P_\tau f(x)\,d\tau\r|
\le \int_0^{t} \tau^{\alpha-1}\,d\tau \simeq t^\az.
 \end{eqnarray*}
The same applies to the third difference.
 For the second difference, Proposition \ref{p2.3}(i) yields that
\begin{eqnarray*}
\lf| P_{t}f(x)-P_{t}f(y)\r|
&&\le |x-y| \sup_{\tz\in(0,1)} |\nabla P_{t} f(x+\tz(y-x))|\ls |x-y|t^{\az-1}.
\end{eqnarray*}
Thus
\begin{eqnarray*}
|f(x)-f(y)|
\ls t^\az+ |x-y|t^{\az-1}.
\end{eqnarray*}
Taking $t=|x-y|$, we get
\begin{eqnarray}\label{thm-char-eq3}
|f(x)-f(y)|\ls |x-y|^\az.
\end{eqnarray}

To verify the remaining part of (ii),
we first make a rotation so that $x=(x_1, 0,\dots,0)$ with $x_1\ge0$.
It is then enough to show that for all $y=(y_1,y')\in\rr\times\rr^{n-1}$,
\begin{eqnarray}\label{aim-1}
|f(x)-f(y)|
\ls \lf({\frac{|x_1-y_1|}{1+x_1}}\r)^\frac{\alpha}2+|y'|^\az.
\end{eqnarray}
Notice that if $|x|=x_1\le2$,
then \eqref{aim-1} follows directly from \eqref{thm-char-eq3},
  since $|f|$ is bounded by $1$.
If $x_1>2$ and $|x_1-y_1|\ge x_1/2$,
the right-hand side of \eqref{aim-1} is  greater than a positive constant,
 so  \eqref{aim-1} follows again.
It only remains to consider the case
$x_1>2$ and $|x_1-y_1|<x_1/2$.
For such $x$ and $y$, we write
$$|f(x)-f(y)|\le |f(x)-f(y_1,0)|+ |f(y_1,0)-f(y)|,$$
and \eqref{thm-char-eq3} already implies that
 $|f(y_1,0)-f(y)|\ls |y'|^\az$.
To estimate $|f(x)-f(y_1,0)|$, we apply \eqref{diff}
 again and proceed as before, but now using \eqref{p2.3-eq2}
 to estimate the $x_1$ derivative. This gives
 that for any $t>0$,
 \[
 |f(x)-f(y_1,0)|
\ls t^\az+ |x_1-y_1| t^{\az-2}\sup_{\tz\in[0,1]} |x_1+\tz(y_1-x_1)|^{-1}.
\]
Since  $|x_1-y_1|<x_1/2$, the supremum here is no larger than $2x_1^{-1}$.
Letting $t=\sqrt{|x_1-y_1|/x_1}$, we obtain
$$
|f(x)-f(y_1,0)|
\ls \lf( \frac{|x_1-y_1|}{x_1}\r)^{\frac\alpha2}\simeq \lf(
\frac{|x_1-y_1|}{1+x_1}\r)^{\frac\alpha2},
$$
so \eqref{aim-1} follows, and (ii)  is verified.

We now prove that (ii) implies (i). Because of Lemma  \ref{lem-x},
we can assume that
\eqref{strong-lip} holds with  $K'\le1$
 and
verify  \eqref{GLip}.
Using  Theorem \ref{t1.3}
and the fact that  $\int_\rn \partial_tP_t(x,y)\,dy=0$, we
can write
\begin{eqnarray*}
|t\partial_tP_tf(x)|
&&= \lf|\int_\rn t \partial_t P_t(x,y)[f(y)-f(x)]\,dy\r|
\le  \sum_{i=1}^4 \int_\rn K_i(t,x,y) |f(y)-f(x)|\,dy.
\end{eqnarray*}
Since the condition \eqref{strong-lip}
implies that $f\in \lip_\az(\rn)$, we have
\begin{eqnarray*}
\int_\rn K_1(t,x,y) |f(y)-f(x)|\,dy
&&  \ls  \int_{\rn}\frac{t}{(t+|x-y|)^{n+1}} |x-y|^\alpha\,dy
\ls t^\alpha.
\end{eqnarray*}
From \eqref{strong-lip}, we deduce that
\begin{eqnarray*}
&&\int_\rn K_2(t,x,y) |f(y)-f(x)|\,dy\\
&&  \quad\ls  \int_\rn\frac t {|x|}
\left(t^2  +  \frac{|x-y_x|}{|x|}+  |y_x^\prime|^2\right)^{ -\frac{n+2}2}
\lf[\lf(\frac{|x-y_x|}{1+|x|}\r)^{\frac\az2} +|y_x^\prime|^\az\r]
\,dy\\
&& \quad \ls  \int_\rn\frac t {|x|}
\left(t^2  +  \frac{|x-y_x|}{|x|}+  |y_x^\prime|^2\right)^{ -\frac{n+2-\az}2}
\,dy.
\end{eqnarray*}
After a rotation of coordinates, we can treat the last integral like the one
in \eqref{p2.1-eq1}; only the exponent is different, and the resulting
bound will be $Ct^\az$.
Finally, Proposition \ref{p2.1} implies  that
\begin{eqnarray*}
\int_\rn \lf[K_3(t,x,y)+K_4(t,x,y)\r] |f(y)-f(x)|\,dy
&&  \ls \|f\|_{L^\infty} \int_\rn \lf[K_3(t,x,y)+K_4(t,x,y)\r] \,dy\\
&& \ls \min\{1,t\}\\
&&\ls t^\alpha.
\end{eqnarray*}
We have verified \eqref{GLip}.
\end{proof}

%%%%%%%%%%%%%%%%%%%%%%%%%%%%%%%%%%%%%%%%%%%%%%%%%%%%%%%

\section{Proof of Theorems \ref{t1.2} and \ref{t1.3}}\label{s4}

\qquad Since $P_t(x,y)$ and  the $K_i(t,x,y)$ are invariant under
rotation,
 we only need to consider $x=(x_1,0\dots,0)$ with $x_1\ge0$ and  write
$y=(y_1,y')$ as before. Theorem \ref{t1.2} is a consequence of the
slightly sharper result in Proposition \ref{p4.1} below.

 A change of variables $\sz=1-e^{-s}$ in \eqref{poisson-ker} leads to
\begin{equation}\label{poisson-ker-sigma}
P_t(x,y)
= \frac1{2\pi^{(n+1)/2}} \int_0^1 \frac
{t}{s(\sz)^{3/2}}\,e^{-\frac{t^2}{4s(\sz)}}\,
\frac{e^{-\frac{|y-x+\sz x|^2}{1-e^{-2s(\sz)}}}}{(1-e^{-2s(\sz)})^{n/2}}
\,e^{s(\sz)}\,d\sz,
\end{equation}
where  $s(\sz)=\log\frac1{1-\sz}$.
In the sequel, we will split the interval of integration into various subintervals, and
in
each subinterval we use either  $s$ or $\sigma$ as variable of integration.

When $0 < y_1 < x_1$, the quantity
\begin{eqnarray*}
 |y- e^{-s} x|^2 =  |y-x+\sz x|^2=  |y_1-x_1+\sz x_1|^2 +  |y'|^2
\end{eqnarray*}
has a minimum at the point
\begin{eqnarray}\label{sigma0}
\sz_0:=\frac{x_1-y_1}{x_1} \in(0,1),
\end{eqnarray}
and
\begin{eqnarray*}
 |y- e^{-s} x|^2 =(\sz-\sz_0)^2 x_1^2+|y'|^2, \qquad 0<s<+\infty.
\end{eqnarray*}
This  will be used repeatedly in what follows.

\begin{prop}\label{p4.1}
Let $t>0$, $x=(x_1,0,\dots,0)\in\rn$ with $x_1\ge0$ and  $y=(y_1,y')\in\rr\times
\rr^{n-1}$.
\begin{enumerate}
\item[\rm(i)] If $y_1\notin(0,x_1)$, then
\begin{eqnarray}\label{p3.1-eq1}
P_t(x,y) \le C\lf[ K_1(t,x,y)+K_3(t,x,y)\r].
\end{eqnarray}
\item[\rm(ii)] If $y_1\in[x_1/2, x_1)$, then
\begin{eqnarray}\label{p3.1-eq2}
P_t(x,y)  \le C\lf[ K_1(t,x,y)+K_2(t,x,y)+K_3(t,x,y)\r].
\end{eqnarray}
\item[\rm(iii)] If $y_1\in(0,x_1/2)$, then
\begin{eqnarray}\label{p3.1-eq3}
P_t(x,y)  \le C\lf[ K_1(t,x,y)+K_3(t,x,y)+K_4(t,x,y)\r].
\end{eqnarray}
\end{enumerate}

\end{prop}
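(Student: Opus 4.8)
The plan is to analyze the integral \eqref{poisson-ker-sigma} by splitting the range $\sigma\in(0,1)$ into a ``heat-kernel'' part on which $1-e^{-2s(\sigma)}\simeq\sigma\simeq s(\sigma)$ (i.e. $\sigma$ bounded away from $1$) and a ``near-origin'' part on which $1-e^{-2s(\sigma)}\simeq1$ and $e^{-s}x$ lies near $0$ ($\sigma$ bounded away from $0$); the cut-off can be chosen conveniently in each of the three cases. Throughout I would use the identity $|y-e^{-s}x|^2=(\sigma-\sigma_0)^2x_1^2+|y'|^2$, valid whenever $\sigma_0=(x_1-y_1)/x_1\in(0,1)$, and, in case (i) where $y_1\notin(0,x_1)$, the elementary bounds $|y-e^{-s}x|\gtrsim|x-y|$ for $\sigma$ bounded away from $1$ and $|y-e^{-s}x|\gtrsim|y|$ for $\sigma$ bounded away from $0$. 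On the heat-kernel part the engine is Lemma \ref{lem-xx}, applied with $\bz=(n+3)/2$, $a\simeq1$ and $T,A,X$ read off from the Gaussian exponent, after first extracting a factor $\exp^\ast(-t)$ from $e^{-t^2/(4s)}$ (legitimate since $s\le a$ forces $t^2/s\ge t^2/a$); on the near-origin part one uses that $\int_1^\infty t\,s^{-3/2}e^{-t^2/(4s)}\,ds\simeq\min\{1,t\}$, or, when sharper information is needed, Lemma \ref{lem-xx} again in the variable $s$.

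Case (i) is the easiest: for $y_1\notin(0,x_1)$ the heat-kernel integrand carries, besides the factor $\exp^\ast(-(t^2+|x-y|^2)/\sigma)$, a growing factor $\exp^\ast(-\sigma x_1^2)$ (from $|y-e^{-s}x|^2\gtrsim|x-y|^2+(1-e^{-s})^2x_1^2$ when $y_1\ge x_1$, and a stronger bound when $y_1\le0$), so Lemma \ref{lem-xx} gives $t\,(t^2+|x-y|^2)^{-(n+1)/2}\exp^\ast(-t|x|)$, which with the extracted $\exp^\ast(-t)$ is $\lesssim K_1$; the near-origin part gives $\min\{1,t\}\exp^\ast(-|y|^2)=K_3$. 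In cases (ii) and (iii) the new feature is the interior minimum of $|y-e^{-s}x|^2$ at $\sigma=\sigma_0\in(0,1)$, and I would split $(0,1)$ into a band $\sigma\simeq\sigma_0$ and its complement. On the complement one still has $|y-e^{-s}x|^2\gtrsim|x-y|^2+\sigma^2x_1^2$ where $\sigma$ is small and $|y-e^{-s}x|\gtrsim|y|$ where $\sigma$ is bounded away from $0$, so the complement contributes $\lesssim K_1+K_3$ exactly as in case (i). On the band the new kernels appear. In case (ii), $\sigma_0\in(0,1/2]$, so on the band $1-e^{-2s}\simeq s\simeq\sigma_0$; integrating the Gaussian in $\sigma-\sigma_0$ produces $t\,x_1^{-1}\sigma_0^{-(n+2)/2}\exp^\ast(-(t^2+|y'|^2)/\sigma_0)$ (with an obvious change when $\sigma_0x_1^2<1$), and recalling $\sigma_0=|x-y_x|/|x|$ and using $e^{-z}\lesssim z^{-(n+2)/2}$ to absorb the case $t^2+|y'|^2\gg\sigma_0$, this is $\lesssim K_2$. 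In case (iii), $\sigma_0\in(1/2,1)$, so the band sits at $s_0=\log(x_1/y_1)>\log2$, in the near-origin range where $1-e^{-2s}\simeq1$; using $(y_1-e^{-s}x_1)^2=y_1^2(1-e^{-(s-s_0)})^2$ and integrating over $|s-s_0|\le1$ gives $t\,y_1^{-1}s_0^{-3/2}\exp^\ast(-t^2/s_0)\exp^\ast(-|y'|^2)$, which is exactly $K_4$ when $y_1>1$ and is $\lesssim\min\{1,t\}\exp^\ast(-|y|^2)=K_3$ when $y_1\le1$.

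The step I expect to be the real obstacle is the bookkeeping in case (ii): one must verify that the band integral is genuinely dominated by $K_2$ rather than by a worse power of $\sigma_0$, and --- more delicately --- that the complementary heat-kernel region never exceeds $K_1+K_2$. The latter is subtle because $|x-y|^2=\sigma_0^2x_1^2+|y'|^2$ can be large while the crude Lemma \ref{lem-xx} bound only sees $(t^2+|y'|^2)^{-(n+1)/2}$; closing the gap requires genuine use of the growing factor $\exp^\ast(-\sigma x_1^2)$ available on $\{\sigma\gtrsim\sigma_0\}$ (equivalently, a further split at $|\sigma-\sigma_0|\simeq\sigma_0$ versus $|\sigma-\sigma_0|\simeq\sigma$) or direct absorption into $K_2$. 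A lesser point worth noting: the exponent $-3/2$ on the logarithm in $K_4$ is forced by the weight $s^{-3/2}$ in \eqref{poisson-ker-sigma} evaluated at $s\simeq s_0=\log(x_1/y_1)$, and appears automatically once the band in case (iii) is set up.
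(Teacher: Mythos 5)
Your decomposition and mechanism are the same as the paper's. The paper realizes your ``band around $\sigma_0$'' as $[\,3\sigma_0/4,\,5\sigma_0/4\,]$ in case (ii) and $[\,\sigma_0-\tfrac{y_1}{4x_1},\,\sigma_0+\tfrac{y_1}{4x_1}\,]$ in case (iii) (which in the $s$ variable is exactly $|s-s_0|\lesssim 1$, matching your description), uses the identity $|y-e^{-s}x|^2=(\sigma-\sigma_0)^2x_1^2+|y'|^2$, feeds the $\sigma<1/2$ complementary pieces into Lemma \ref{lem-xx} with $\beta=(n+3)/2$, and sends the $\sigma>1/2$ pieces to $K_3$. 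The two points you flag as potential obstacles are both resolvable in the way you already indicate: on the complement of the band one checks that $\sigma_0-\sigma\gtrsim\max\{\sigma,\sigma_0\}$ (or $\sigma-\sigma_0\gtrsim\max\{\sigma,\sigma_0\}$ on the other side), so $|y-e^{-s}x|^2\gtrsim|x-y|^2$ and $\gtrsim\sigma^2x_1^2$ hold simultaneously and the complement genuinely contributes only $K_1+K_3$, with no $K_2$ leakage; and on the band, after bounding the Gaussian integral by $\sqrt{\sigma_0}/x_1$ one absorbs the surplus power of $\sigma_0$ into the exponential factor via $(1+z)^m\lesssim\exp^\ast(z)$, recovering precisely the $(t^2+\sigma_0+|y'|^2)^{-(n+2)/2}$ inside $K_2$.

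The one spot you genuinely gloss over is the case $x_1<1$ in (ii). The term $K_2$ carries the characteristic function $\chi_{\{|x|>1,\,\dots\}}$, so it vanishes there, and ``this is $\lesssim K_2$'' cannot be the endpoint of the band estimate; your parenthetical ``obvious change when $\sigma_0x_1^2<1$'' bifurcates on the wrong condition. The paper handles $x_1<1$ by bounding the Gaussian integral by $\sigma_0$ (the band width) rather than $\sqrt{\sigma_0}/x_1$, which produces the bound in \eqref{j}; then, since both $x_1<1$ and $x_1-y_1<1$, one has $\bigl(x_1/(x_1-y_1)\bigr)^{(n+1)/2}\le(x_1-y_1)^{-(n+1)}$ and $\exp^\ast(-t^2)\lesssim\exp^\ast(-t(1+|x|))$, so the band falls under $K_1$. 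You should add this branch to close the argument.
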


\begin{proof}
To prove (i), let $y_1\notin(0,x_1)$.
We split the integral in \eqref{poisson-ker-sigma} into
integrals over $(0,1/2)$ and $[1/2,1)$, called $J_1$ and $J_2$.

For  $J_1$, noticing that $\sz\in(0,1/2)$ is equivalent to $s(\sz)\in(0,\ln2)$,
we  have $1-e^{-2s(\sz)}\simeq s(\sz)\simeq \sz$ and $e^{s(\sz)}\simeq 1$.
As a result,
\begin{equation*}
J_1\simeq  \int_0^{1/2}  \frac{t} {\sigma^{(n+3)/2}}
\exp^*{ \left(-\frac{t^2}\sigma\right)}
\exp^*{ \left(-\frac{|y-x+\sigma x|^2}\sigma\right)}\, d\sigma.
\end{equation*}
It follows from $y_1\notin(0,x_1)$ and $\sz<1/2$ that
$|y_1-x_1+\sigma x_1| \gtrsim \max\{\sz x_1,\,|x_1-y_1|\}$,
 and thus
\begin{equation}\label{cond}
|y-x+\sigma x| \gtrsim \max\{\sz |x|,|x-y|\}.
\end{equation}
Notice that for $\sz\in(0,1)$, one has
\begin{equation}\label{cond3}
\exp^*{ \left(-\frac{t^2}\sigma\right)} \ls \exp^*{ \left(-t^2\right)}
\ls \exp^*{ \left(-t\right)}.
\end{equation}
Combined with Lemma \ref{lem-xx}, this yields that
\begin{eqnarray}\label{J1}
J_1&&\ls  \exp^*{ \left(-t\right)}
\int_0^{1/2}  \frac{t} {\sigma^{(n+3)/2}}
\exp^*{ \left(-\frac{t^2+|y-x|^2}\sigma\right)} \,\exp^\ast(-\sz|x|^2)\,
d\sigma\\
&&\ls \frac t{(t^2+|y-x|^2)^{(n+1)/2}}
\exp^\ast\lf(-t(1+|x|)\r)\notag\\
&&\simeq  K_1(t,x,y). \notag
\end{eqnarray}

For $J_2$ we use the variable $s$, getting
\begin{eqnarray}\label{j2-e1}
J_2\simeq \int_{\log 2}^\infty
\frac {t}{s^{3/2}}\,\exp^\ast\lf(-\frac{t^2}{s}\r)\,
\exp^\ast(-|y-e^{-s}x|^2) \,ds.
\end{eqnarray}
Since $y_1\notin (0,x_1)$ and $s\ge\log 2$, one has
$|y-e^{-s}x|\simeq  |y_1-e^{-s}x_1|+|y^\prime |\gs |y_1|+|y^\prime |\simeq
|y|$
and hence
\begin{equation}\label{cond2}
\exp^\ast(-|y-e^{-s}x|^2) \ls \exp^\ast(-|y|^2).
\end{equation}
 Thus,
\begin{eqnarray}\label{j2-e2}
J_2&&\ls  \exp^\ast(-|y|^2)\int_{\log 2}^\infty
\frac {t}{s^{3/2}}\,\exp^\ast\lf(-\frac{t^2}{s}\r)\,
\,ds\ls \min\{1,t\}\exp^\ast(-|y|^2)\simeq K_3(t,x,y).
\end{eqnarray}
We have proved  \eqref{p3.1-eq1} and  (i).

Next, we assume $y_1\in[x_1/2,x_1)$ and prove (ii). With $\sz_0$ given by
\eqref{sigma0} and now satisfying $0<\sz_0\le 1/2$,
we split the integral in \eqref{poisson-ker-sigma} into
integrals over  the three
 intervals $(0,\frac34 \sz_0)$, $[\frac 34\sz_0,\frac54\sz_0]$
and $(\frac54 \sz_0,1)$, denoted $J_{1,1}$, $J_{1,2}$ and $J_{1,3}$,
respectively.

In $J_{1,1}$ we have  $1-e^{-2s(\sz)}\simeq s(\sz)\simeq \sz$
and $e^{2s(\sz)}\simeq 1$, and also
$$|y-x+\sigma x|^2
=(\sz-\sz_0)^2x_1^2+|y'|^2 \simeq \sz_0^2 x_1^2 +|y'|^2 =|x-y|^2.$$
 We get
\begin{equation}\label{j11}
J_{1,1}\simeq  \int_0^{3\sigma_0/4}  \frac{t} {\sigma^{(n+3)/2}}
\exp^*{ \left(-\frac{t^2}\sigma\right)}
\exp^*{ \left(-\frac{|x-y|^2}\sigma\right)}\, d\sigma.
\end{equation}
Since   $|x-y| \gtrsim \sz_0 x_1  \ge \sz x_1 $,
the last $\exp^*$ expression here allows us to introduce also a factor
$\exp^\ast(-\sz|x|^2)$ in the integrand. Because of
 \eqref{cond3}, we can argue as in  \eqref{J1} to get $J_{1,1} \ls K_1(t,x,y)$.

In the integral  $J_{1,2}$,  we have
$\frac{3}{4}\sz_0\le \sz\le \frac{5}{4}\sz_0\le \frac58$
and so  $1-e^{-2s(\sz)}\simeq s(\sz)\simeq \sz\simeq\sz_0$
and $e^{s(\sz)}\simeq1$.  Thus,
\begin{eqnarray}\label{j12}
J_{1,2} && \simeq
 \int_{\frac 34 \sz_0}^{\frac{5}{4}\sz_0}
 \frac{t}{\sigma_0^{(n+3)/2}}
\exp^*{ \left(-\frac{t^2}{\sigma_0}\right)}
\exp^*{ \left(-\frac{(\sz-\sz_0)^2x_1^2+|y'|^2}{\sigma_0}\right)}
\, d\sigma\\
  && \simeq  {t} \lf(\frac{x_1} {x_1-y_1}\r)^{\frac{n+3}2}
\exp^\ast {\left(-\frac{(t^2+|y'  |^2)x_1}{x_1-y_1}\right)}
\int_{\frac34\sz_0}^{\frac54\sz_0}
\exp^*{ \left(-\frac{x_1^3(\sigma -\sz_0)^2}{x_1-y_1}\right)}\, d\sigma,\notag
\end{eqnarray}
where we inserted the expression  \eqref{sigma0} for $\sigma_0$.
The last integral, even extended to the whole line, is
$O((x_1-y_1)/x_1^3)^{ \frac12})$. The $\exp^\ast$ expression preceding it
is now estimated by a product of two factors.  This leads to
\begin{eqnarray*}
J_{1,2}
&&\lesssim  {t} \lf(\frac{x_1} {x_1-y_1}\r)^{\frac{n+3}2}
\min\left\{1, \;
\left( \frac {(t^2+|y'  |^2)x_1}{x_1-y_1}\right)^ {-\frac{n+2}2}\right\}
\exp^\ast {\left(-\frac{(t^2+|y'  |^2)x_1}{x_1-y_1}\right)}
  {\left( \frac{x_1-y_1}{x_1^3}\right)^{ \frac12}}\\
&&  \simeq  K_2(t,x,y)
\end{eqnarray*}
 for $x_1=|x|\ge 1$.

The last integral  in \eqref{j12} is also $O(\sigma_0) = O((x_1-y_1)/x_1)$, and
 we get similarly
\begin{eqnarray}\label{j}
\qquad J_{1,2}
 &&\lesssim   {t} \lf(\frac{x_1} {x_1-y_1}\r)^{\frac{n+1}2}
\,  \min\left\{1, \; \left( \frac {(t^2+|y'|^2)x_1}{x_1-y_1}\right)^
{-\frac{n+1}2}
\right\}\,\exp^\ast {\left(-\frac{t^2x_1}{x_1-y_1}\right)}\,\\
&&\ls t  \min\lf\{\lf(\frac{x_1} {x_1-y_1}\r)^{\frac{n+1}2},\;
\frac 1 {(t^2+|y' |^2)^{(n+1)/2}}
\r\}\, \exp^\ast (-t^2)\notag,
\end{eqnarray}
where we estimated the  $\exp^\ast$ factor by means of the inequality
 ${x_1}/{(x_1-y_1)}>1$. For  $x_1<1$,  one has $x_1-y_1<1$ and so
$(x_1/(x_1-y_1))^{(n+1)/2} \le (x_1-y_1)^{-(n+1)}$, and also
$\exp^\ast (-t^2) \lesssim  \exp^\ast (-t(1+|x|)) $. As a result,
$J_{1,2} \lesssim   K_1(t,x,y) $.

To treat $J_{1,3}$, we  split it
into integrals over the intersection of $(\frac54 \sz_0,1)$ with each of the
 intervals $(0,1/2]$, $(1/2,1)$,
   and denote these  by $J_{1,3}^{(1)}$ and $J_{1,3}^{(2)}$, respectively.

For $J_{1,3}^{(1)}$, we may  assume that
 $\frac54\sz_0<\frac12$; otherwise
 $J_{1,3}^{(1)}=0$.
Since here $\sz\in(\frac 54\sz_0, \frac12]$, we again have
$1-e^{-2s(\sz)}\simeq s(\sz)\simeq\sz$
and $e^{s(\sz)}\simeq1$. Further,
$ (\sz-\sz_0) x_1 \simeq \sz x_1 \ge \sz_0 x_1 = x_1-y_1 $.
Thus $(\sz-\sz_0)x_1  \simeq \mathrm{max}\{\sz x_1, x_1-y_1 \}$,
which implies  \eqref{cond},
 and the argument of  \eqref{J1}  leads to
 $J_{1,3}^{(1)}  \ls K_1(t,x,y).$

Next, we estimate $J_{1,3}^{(2)}$.
For $\max\{\frac54\sz_0,\frac12\}<\sz<1$, we have
$|\sz-\sz_0| x_1 \simeq \sz x_1 \simeq x_1 \simeq y_1$ and
 so
$|y-x+\sz x|^2\gs |y|^2$.
This means that  \eqref{cond2} holds and, arguing as in  \eqref{j2-e2},
we conclude that
 $J_{1,3}^{(2)} \ls K_3(t,x,y)$.
Altogether, we obtain \eqref{p3.1-eq2} and hence (ii).

Finally, we consider (iii), where $y_1\in(0,x_1/2)$ and $\sz_0\in(1/2,\,1)$.
We split the integral in \eqref{poisson-ker-sigma} into
integrals over the
 intervals
 $(0,\sz_0-\frac{y_1}{4x_1})$, $[\sz_0-\frac{y_1}{4x_1},\sz_0+\frac{y_1}{4x_1}]$
and $(\sz_0+\frac{y_1}{4x_1},1)$,
and denote them by $J_{2,1}$, $J_{2,2}$ and $J_{2,3}$, respectively.
Notice that $0<\sz_0-\frac{y_1}{4x_1}<\sz_0+\frac{y_1}{4x_1}<1$.

For $J_{2,1}$, we observe that $\sz<\sz_0-\frac{y_1}{4x_1}$ corresponds to
$s=s(\sz)<\log \frac{x_1}{y_1}-\log \frac 54$.
For such $s$ and $\sz$, one has
$e^{-s}>\frac{5y_1}{4x_1}$ and $|y-x+\sz x|=|y-e^{-s}x|\simeq
e^{-s}x_1+|y'|$.
With $s$ as  variable of integration, we have
\begin{eqnarray*}%\label{j21}
J_{2,1}
&&\simeq \int_0^{\log \frac{x_1}{y_1}-\log \frac54}
\frac {t}{s^{3/2}}\,\exp^\ast\lf(-\frac{t^2}{s}\r)\,
\frac{\exp^\ast\lf(-\frac{e^{-2s}x_1^2+|y'  |^2}{1-e^{-2s}}\r)}{(1-e^{-2s})^{n/2}}
\,ds.
\end{eqnarray*}
Splitting the interval of integration here by intersecting it with
 $(0,\log 2]$ and $(\log 2,\infty)$, we obtain two integrals
    denoted   $J_{2,1}^{(1)}$ and $J_{2,1}^{(2)}$.
 For $0<s\le\log 2$, one has $1-e^{-2s}\simeq s\simeq \sigma$  and
$e^{-s}x_1 \simeq x_1\simeq x_1 -y_1 $. This implies  \eqref{cond}
and, arguing as before, we obtain  $J_{2,1}^{(1)}\ls K_1(t,x,y)$.

If $\log 2<s<\log \frac{x_1}{y_1}-\log \frac54$, then $1-e^{-s}\simeq
1$
and  $e^{-s}x_1\gs y_1$, which implies
\eqref{cond2} and then also
 $J_{2,1}^{(2)}\ls K_3$, as before.
We have proved that $J_{2,1}\ls K_1+K_3$.

For  $J_{2,2}$,  we integrate in $s$, getting
\begin{eqnarray*}
J_{2,2}
&&\simeq \int_{\log \frac{x_1}{y_1}-\log \frac54}^{\log
\frac{x_1}{y_1}+\log \frac43}
\frac {t}{s^{3/2}}\,\exp^\ast\lf(-\frac{t^2}{s}\r)\,
\frac{\exp\lf(-\frac{|y-e^{-s}x|^2}{1-e^{-2s}}\r)}{(1-e^{-2s})^{n/2}} \,ds.
\end{eqnarray*}
Since now $x_1>2y_1$, we see that $\log \frac{x_1}{y_1}-\log \frac54
\gs 1$,
which implies that $s\simeq \log \frac{x_1}{y_1}$ and $1-e^{-2s}\simeq 1$
in this integral.
Let $\tau = \log\frac {x_1}{y_1} -s$,
so that $-\log \frac43 \le \tau \le \log \frac54$
and $$|y-e^{-s}x|
\simeq  |y_1-e^{-s} x_1| +|y' |
= |(1-e^{\tau})y_1|+|y'| \simeq  |\tau| y_1+  |y' |.$$
It follows that
\begin{eqnarray}\label{IV}
J_{2,2}
 &&\simeq  \frac t{(\log\frac{x_1}{y_1})^ {3/2}}
\exp^*{ \left(-\frac{t^2}{\log\frac{x_1}{y_1}}\right)} \exp^*{(-|y' |^2)}
\int_{-\log \frac43}^{\log\frac 54}
    \exp^*{(-\tau^2y_1^2)}
  \, d\tau \\
&&\ls    \frac t{(\log\frac{x_1}{y_1})^ {3/2}}
\exp^*{ \left(-\frac{t^2}{\log\frac{x_1}{y_1}}\right)} \exp^*{(-|y' |^2)}
 \frac1{y_1}\notag\\
&&
 \simeq  K_4(t,x,y) \notag
\end{eqnarray}
 when $y_1>1$. If $y_1\in(0,1]$,  we control the integral in
\eqref{IV} by $1$ and  obtain
\begin{eqnarray}\label{eq:4.x1}
\qquad J_{2,2}
&&\simeq  \frac t{(\log\frac{x_1}{y_1})^ {3/2}}
\exp^*{ \left(-\frac{t^2}{\log\frac{x_1}{y_1}}\right)} \exp^*{(-|y' |^2)}
\ls \min\{1,t\} \exp^\ast (-|y'|^2)
 \simeq  K_3(t,x,y).
\end{eqnarray}

In  $J_{2,3}$, we have
$s>\log \frac{x_1}{y_1}+\log \frac43 > \log 2$ and thus
$y_1 - e^{-s}x_1 \simeq y_1$,
which once more leads to \eqref{cond2} and
 $J_{2,3}\ls K_3$.

Summing up,
we obtain \eqref{p3.1-eq3} and (iii).
\end{proof}

%\begin{proof}[Proof of Theorem \ref{t1.2}]
%By Proposition \ref{p4.1}, we see that
%(a) holds for all
%for all $y\in\rn$ and $x=(x_1,0,\dots,0)\in\rn$ with $x_1\ge0$. Then,
%for general $x,y\in\rn$, the conclusion of Theorem \ref{t1.2}(a)
%can be derived from an orthogonal transformation.
%Being more precise, given any vector $x\in\rn$,
%there exists an unique orthogonal
% matrix $A_x$ such that
% $A_xx=(|x|, 0,\dots, 0)=:x^\ast.$
%Let $y^\ast:= A_xy$. Then,
%$|x-e^{-s}y|=|x^\ast-e^{-s}y^\ast|$ for all $s\in[0,\infty)$.
%By this and \eqref{poisson-ker}, we see that
%$$P_t(x, y)= P_t(A_xx, A_xy)
%= P_t(x^\ast, y^\ast),$$
%so that
%$$P_t(x,y)\le K_t^{(1)}(x^\ast,y^\ast)+ K_2(t, x,y)(x^\ast,y^\ast)
%+ K_t^{(3)}(x^\ast,y^\ast)+K_t^{(4)}(x^\ast,y^\ast).$$
%If we write $y^\ast=y^\ast_{x^\ast}+(y^\ast)'$,
%where
%$y^\ast_{x^\ast}$ denotes the vector
%parallel to $x^\ast$ and $(y^\ast)'$ denotes
%the vector orthogonal to  $x^\ast$, then
%$|y_x|=|y^\ast_{x^\ast}|$ and  $|y_x^\prime |=|(y^\ast)'|$.
%With these observations, we see that
%$$K_i(t,x,y)= K_t^{(i)}(x^\ast,y^\ast),\qquad \textup{for}\quad i\in\{1,2,3,4\}.$$
%Therefore, (a) holds for general $x,y\in\rn$.
%
%\end{proof}

\begin{proof}[Proof of Theorem \ref{t1.3}]
By differentiating with respect to $t$ in \eqref{poisson-ker}, we have
\begin{equation*}
t\partial_{t} P_t(x,y)
= \frac1{2\pi^{(n+1)/2}} \int_0^\infty \frac {t}{s^{3/2}}\,e^{-\frac{t^2}{4s}}\,
\lf(1-\frac{t^2}{2s}\r)
\frac{e^{-\frac{|y-e^{-s}x|^2}{1-e^{-2s}}}}{(1-e^{-2s})^{n/2}} \,ds.
\end{equation*}
This expression
is similar to that  in \eqref{poisson-ker}, only with an
extra factor $1-t^2/2s$. Since
$$\lf|1-\frac{t^2}{2s}\r|e^{-\frac{t^2}{4s}} \ls \exp^\ast\lf(-\frac{t^2}{s}\r),
$$
we see that
all our  estimates for $P_t$ in Proposition \ref{p4.1}
remain valid for $|t\partial_t P_t|$.

For $i\in\{1,2,\dots,n\}$,
we have
\begin{equation*}%\label{partial-x-s}
t\partial_{x_i} P_t(x,y)
= \frac1{\pi^{(n+1)/2}} \int_0^\infty \frac {t}{s^{3/2}}\,e^{-\frac{t^2}{4s}}\,
\frac{te^{-s}(y_i-e^{-s}x_i)}{1-e^{-2s}}
\frac{e^{-\frac{|y-e^{-s}x|^2}{1-e^{-2s}}}}{(1-e^{-2s})^{n/2}} \,ds.
\end{equation*}
Compared with  \eqref{poisson-ker},
the integrand here has an extra factor
$$\frac{te^{-s}(y_i-e^{-s}x_i)}{1-e^{-2s}}
=
\frac t{\sqrt s}\; \frac{\sqrt s\,e^{-s}}{\sqrt{1-e^{-2s}}}\;
\frac{y_i-e^{-s}x_i}{\sqrt{1-e^{-2s}}}.
$$
Since the middle factor to the right here is bounded, we can suppress
the extra factor
if we replace $\,e^{-\frac{t^2}{4s}}\,e^{-\frac{|y-e^{-s}x|^2}{1-e^{-2s}}}$ by
$\,\exp^\ast\lf(-\frac{t^2}{s}\r)\exp^\ast(-\frac{|y-e^{-s}x|^2}{1-e^{-2s}})$
in the integral.
Thus
\begin{equation*}
|t\partial_{x_i} P_t(x,y)|
\ls \int_0^\infty \frac {t}{s^{3/2}}\,\exp^\ast\lf(-\frac{t^2}{s}\r)\,
\frac{\exp^\ast(-\frac{|y-e^{-s}x|^2}{1-e^{-2s}})}{(1-e^{-2s})^{n/2}} \,ds,
\end{equation*}
so  the estimates for $P_t$ are valid also for $|t\partial_{x_i}P_t|$.
\end{proof}

%%%%%%%%%%%%%%%%%%%%%%%%%%%%%%%%%%%%%%%%%%%%%%%%%%%%%%%%
\section{Proof of Theorem \ref{t1.4}}\label{s5}

\qquad
Notice that
\begin{equation}\label{x1derivative}
\partial_{x_1} P_t(x,y)
= \frac1{\pi^{(n+1)/2}} \int_0^\infty \frac {t\,e^{-\frac{t^2}{4s}}}{s^{3/2}}\,
\frac{e^{-s}}{\sqrt{1-e^{-2s}}}\,\frac{y_1-e^{-s}x_1}{\sqrt{1-e^{-2s}}}
\frac{e^{-\frac{|y-e^{-s}x|^2}{1-e^{-2s}}}}{(1-e^{-2s})^{n/2}} \,ds.
\end{equation}
We consider the same  cases (i), (ii) (iii) as in Proposition \ref{p4.1},
 and exactly as in the proof of that proposition, we split the integral
into parts by splitting the interval of integration.
The parts will again be denoted by $J_1$, $J_2$, $J_{2,1}^{(1)}$
etc.
For all these parts except  $J_{1,2}$ and  $J_{2,2}$, we
follow closely
the arguments in Section  \ref{s4}; in particular we often use
$\sigma = 1- e^{-s}$ instead of $s$.

Since
\begin{eqnarray*}%\label{eq-xx3}
\frac{|y-e^{-s}x|}{\sqrt{1-e^{-2s}}} \,
e^{-\frac{|y-e^{-s}x|^2}{1-e^{-2s}}}
\ls \exp^\ast \lf(-\frac{|y-e^{-s}x|^2}{1-e^{-2s}}\r),
\end{eqnarray*}
the absolute value of the integrand in \eqref{x1derivative} is controlled by
\[
 \frac {t\,e^{-\frac{t^2}{4s}}}{s^{3/2}}\,\frac{e^{-s}}{\sqrt{1-e^{-2s}}} \,
\frac {\exp^\ast \lf(-\frac{|y-e^{-s}x|^2}{1-e^{-2s}}\r)}
{(1-e^{-2s})^{n/2}}.
\]
Switching to integration with respect to $\sigma$, we get instead, since
$d\sz =e^{-s} ds $,
\begin{equation*}%\label{X}
 \frac
 {t\,e^{-\frac{t^2}{4s}}}{s^{3/2}}\,\frac1{\sqrt{1-e^{-2s}}} \,
\frac {\exp^\ast \lf(-\frac{|y-x+\sz x|^2}{1-e^{-2s}}\r)}
{(1-e^{-2s})^{n/2}},
\end{equation*}
where $s=s(\sz)=\log 1/(1-\sz)$. Compared with the integral treated in the proof
of  Proposition~\ref{t1.4},
%Proposition \ref{p4.1},
we now have an extra factor which for $s< \log 2$,
i.e., $\sz<1/2$, is controlled by $s^{-1/2} \simeq  \sz^{-1/2}$,
and for  $s > \log 2$ by $e^{-s}$.

For the integrals  $J_1$, $J_{1,1}$, $J_{1,3}^{(1)}$ and $J_{2,1}^{(1)}$, we
integrate in $\sz$ and argue
as in Section \ref{s4}. %Proposition  \ref{p4.1}.
Because of the extra factor   $\sz^{-1/2}$,
the exponent $(n+3)/2$ of  $\sigma$ in \eqref{x1derivative} will now be
 $(n+4)/2$ in the analogous estimates. As a result, the bound
 obtained will  be $Z_1(t,x,y)$ instead of  $K_1(t,x,y)$.

For  $J_2$, $J_{1,3}^{(2)}$ and $J_{2,1}^{(2)}$ and $J_{2,3}$, we use  $s$ as
variable of
integration.  Arguments
similar to those  in  Section \ref{s4}
% Proposition \ref{p4.1}
show that  the integrand is now dominated by
 \begin{equation*}
    \frac {t}{s^{3/2}}\,e^{-\frac{t^2}{4s}}\,e^{-s} \,
 {\exp^\ast (-|y_1-e^{-s}x_1|^2)}
\exp^\ast (-|y|^2).
\end{equation*}
The interval of integration is $(\log 2, +\infty)$ or a subset of it.
Since
\begin{equation*}%\label{eq:Z3}
\frac {t}{s^{3/2}}\,\exp^\ast\lf(-\frac{t^2}{s}\r) \ls\min\{t, t^{-2}\},
\end{equation*}
the integrals  considered are controlled by
\begin{eqnarray*}
&&\min\{t, t^{-2}\}\,\exp^\ast (-|y|^2)\,
\int_{\log 2}^\infty e^{-s} \,{\exp^\ast (-|y_1-e^{-s}x_1|^2)}\,ds \\
&&\quad\ls
\min\{t, t^{-2}\}\,\exp^\ast (-|y|^2)\,\min\lf\{1, \frac1{x_1}\r\} \\
&& \quad\ls
  Z_3(t,x,y).
\end{eqnarray*}

It remains to estimate  $J_{1,2}$ and  $J_{2,2}$, in which  $y_1$ is as in
 (ii) and
 (iii) of Proposition \ref{p4.1}, respectively.

For  $J_{1,2}$, we thus assume   $y_1\in[x_1/2,\,x_1)$. When
 $0\le x_1 \le 1$, we can estimate
$ |J_{1,2}|$
 as in \eqref{j}. But now the four exponents  $(n+1)/2$ will be replaced by
 $(n+2)/2$, and in the next step, we estimate  $ (x_1/( x_1-y_1))^{(n+2)/2}$
by $( x_1-y_1)^{-(n+2)}$. The result will be
 $ |J_{1,2}| \ls Z_1(t,x,y)$.

When $ x_1 > 1$,
we shall estimate
\begin{eqnarray*}
  J_{1,2}=C \int_{|\sz-\sz_0|\le\frac14\sz_0}
\frac {t}{s(\sz)^{3/2}}\, \exp\lf(-\frac{t^2}{4s(\sz)}\r)\,
\frac{(\sz-\sz_0) x_1}{1-e^{-2s(\sz)}}
\frac{e^{-\frac{|\sz-\sz_0|^2|x|^2+|y'|^2}{1-e^{-2s(\sz)}}}}{(1-e^{-2s(\sz)})^{n/2}}
\,d\sz.
\end{eqnarray*}
Here $\sz_0 \le 1/2$, and $s(\sz) \simeq \sz \le 5/8$ in the integral.
Let us make a change of variable $u= (\sz-\sz_0) x_1$.
Then $\sz=\sz(u)= \sz_0+{u}/{x_1}$,
and we write $s(u)$ for $s(\sz(u))$ so that
\begin{equation}\label{s(u)}
s(u)=\log \frac1{1-\sz(u)}
=\log \frac1{1-\sz_0-{u}/{x_1}}=\log\frac{x_1}{y_1-u}.
\end{equation}
Thus
\begin{eqnarray*}
J_{1,2}
&&=\frac{C}{x_1}\int_{|u|\le\frac{x_1-y_1}{4}}
\frac {t}{s(u)^{3/2}}\, \exp\lf(-\frac{t^2}{4s(u)}\r)\,
\frac{u}{1-e^{-2s(u)}}
\frac{e^{-\frac{u^2+|y'|^2}{1-e^{-2s(u)}}}}{(1-e^{-2s(u)})^{n/2}} \,du\\
&&=\frac{C}{x_1}\int_{|u|\le \frac{x_1-y_1}{4}}
u F(s(u),u)\,du,
\end{eqnarray*}
where for  $\tau\in(0,\infty)$ and $w\in\rr$,
\begin{eqnarray*}
F(\tau,w) = \frac {t}{\tau^{3/2}[1-e^{-2\tau}]^{(n+2)/2}}\,
\exp\lf(-\frac{t^2}{4\tau}\r)\,
\exp\lf(-\frac{w^2+|y'|^2}{1-e^{-2\tau}}\r).
\end{eqnarray*}
Notice that $F(\cdot,w)=F(\cdot,-w)$ for   $w\in\rr$.
We can write
\begin{eqnarray}\label{I12-eq1}
J_{1,2}
&&=\frac{C}{x_1}\int_0^{\frac{x_1-y_1}{4}}
u \lf[F(s(u),u)-F(s(-u),u)\r]\,du,
\end{eqnarray}
and here
\begin{eqnarray}\label{I12-eq11}
|F(s(u),u)-F(s(-u),u)|
\le |s(u)-s(-u)| \sup_{s(-u)<\tau<s(u)} \lf|{\partial_\tau F(\tau,u)}\r|.
\end{eqnarray}
From  \eqref{s(u)} and the mean value theorem, we deduce that
for $0 < u \le (x_1-y_1)/4 $
\begin{eqnarray}\label{I12-eq2}
|s(u)-s(-u)|
&&\le 2u \sup_{-u<v<u}\frac1{y_1-v}.
\end{eqnarray}
With  $s(-u)<\tau<s(u)$, we have
\begin{eqnarray*}
\partial_\tau F(\tau,u)
= F(\tau,u)
\lf[ -\frac3{2\tau} - \frac{(n+2)e^{-2\tau}} {1-e^{-2\tau}}
+\frac{t^2}{4\tau^2}
+\frac{2(u^2+|y'|^2)e^{-2\tau} }{(1-e^{-2\tau})^2}\r].
\end{eqnarray*}
Here ${(n+2)e^{-2\tau}}/(1-e^{-2\tau})\ls \tau^{-1}$, and
$\frac{t^2}{4\tau^2} \exp\lf(-\frac{t^2}{4\tau}\r)
\ls \tau^{-1}\exp^\ast(-\frac{t^2}\tau)$. Further,
\begin{eqnarray*}
\frac{(u^2+|y'|^2)e^{-2\tau} }{(1-e^{-2\tau})^2}\,
\exp\lf(-\frac{u^2+|y'|^2}{1-e^{-2\tau}}\r)
&&\ls
\frac{e^{-2\tau}}{1-e^{-2\tau}}\,\exp^\ast\lf(-\frac{u^2+|y'|^2}{1-e^{-2\tau}}\r)\\
&&\ls \frac1\tau \, \exp^\ast\lf(-\frac{u^2+|y'|^2}{1-e^{-2\tau}}\r),
\end{eqnarray*}
and so
\begin{eqnarray}\label{I12-eq3}
\lf|{\partial_\tau F(\tau,u)}\r|
&&\ls  \frac{t}{\tau^{5/2}[1-e^{-2\tau}]^{(n+2)/2}}\,
\exp^\ast\lf(-\frac{t^2}{\tau}\r)
\exp^\ast\lf(-\frac{u^2+|y'|^2}{1-e^{-2\tau}}\r).
\end{eqnarray}

Recall that  $x_1/2\le y_1<x_1$. In \eqref{I12-eq2} we have
  $|v|<u\le (x_1-y_1)/{4}<{y_1}/2$
so  that $y_1-v\simeq y_1\simeq x_1$, and we conclude that
$$|s(u)-s(-u)| \ls \frac{u}{x_1}.$$
Since all occurring values of $s(\pm u)$ and $\tau$ satisfy
 $s(\pm u) \simeq \tau \simeq\sz_0$, \eqref{I12-eq3}  implies
\begin{eqnarray*}
\sup_{s(-u)<\tau<s(u)} \lf|{\partial_\tau F(\tau,u)}\r|
&&\ls  \frac{t}{\sz_0^{(n+7)/2}}
\exp^\ast\lf(-\frac{t^2}{\sz_0}\r)
\exp^\ast\lf(-\frac{u^2+|y' |^2}{\sz_0}\r).\notag
\end{eqnarray*}
Inserting the last two estimates in \eqref{I12-eq11}, we obtain
$$
|F(s(u),u)-F(s(-u),u)|
\ls \frac{u}{x_1}\,\frac{t}{\sz_0^{(n+7)/2}}\,
\exp^\ast\lf(-\frac{t^2}{\sz_0}\r)\,
\exp^\ast\lf(-\frac{u^2+|y'  |^2}{\sz_0}\r),
$$
which combined with \eqref{I12-eq1} implies that
\begin{eqnarray*}
|J_{1,2}|
&&\ls  \frac t{x_1^2\, \sz_0^{(n+7)/2}}\,\exp^\ast\lf(-\frac{t^2}{\sz_0}\r)
\int_{0}^{\frac{x_1-y_1}{4}}u^2\,
\exp^\ast\lf(-\frac{u^2+|y'|^2}{\sz_0}\r)\,du\notag\\
&&
\ls \frac t{x_1^2} \, \frac 1{\sz_0^{(n+4)/2}}\,
\exp^\ast\lf(-\frac{t^2 + |y'|^2}{\sz_0}\r) \notag\\
&&\lesssim \frac t{x_1^2} \, \frac 1{\sz_0^{(n+4)/2}}\,
\min \lf\{1, \lf(\frac{\sz_0} {t^2 + |y'|^2}\r)^{(n+4)/2}  \r\}\,
\exp^\ast\lf(-\frac{t^2 + |y'|^2}{\sz_0}\r).
  %&&\simeq  \frac {t\,x_1^{n/2}}{(x_1-y_1)^{(n+4)/2}}\,\exp^\ast\lf(-\frac{t^2 x_1}{x_1-y_1}\r)\,\exp^\ast\lf(-\frac{|y'|^2 x_1}{x_1-y_1}\r)\notag\\&&\simeq Z_2(t,x,y).
\end{eqnarray*}
Since $\sigma_0 = (x_1-y_1)/x_1$, we see that the last expression amounts
to  $Z_2(t,x,y)$.

We shall finally estimate $J_{2,2}$, in which $y_1\in(0, x_1/2)$.
 When $0< y_1\le 1$, we  have an
upper estimate for  $|J_{2,2}|$
 like \eqref{eq:4.x1}, but now
with an extra factor $\exp(-\log  \frac{x_1}{y_1}) \ls (1+x_1)^{-1}$
coming  from $e^{-s}$; recall that $|s -\frac{x_1}{y_1} | \ls 1$. Thus
\begin{eqnarray*}
|J_{2,2}|
\ls  \frac t{(\log\frac{x_1}{y_1})^ {3/2}}\,
\exp^*{ \left(-\frac{t^2}{\log\frac{x_1}{y_1}}\right)}\,
\frac1{1+x_1}\, \exp^*{(-|y' |^2)}.
\end{eqnarray*}
The first $\exp^*$ factor here is controlled by
$\min\left\{1,\lf(\frac{t^2}{\log (x_1/y_1)}\r)^{-3/2} \right\}$. Since $\log
\frac{x_1}{y_1} \gtrsim 1$,
this is seen to lead to $|J_{2,2}| \ls Z_3(t,x,y) $.

When $y_1 > 1$, we estimate  $J_{2,2}$
 by modifying the preceding argument for $J_{1,2}$.
Instead of \eqref{I12-eq1}, we get now
\begin{eqnarray}\label{I22-eq1}
J_{2,2}
&&=\frac{C}{x_1}\int_0^{\frac{y_1}{4}}
u \lf[F(s(u),u)-F(s(-u),u)\r]\,du,
\end{eqnarray}
and we still  have  \eqref{s(u)},  \eqref{I12-eq11}, \eqref{I12-eq2} and
\eqref{I12-eq3}.
 Since now $0 \le u \le {y_1}/{4}$ in \eqref{I12-eq2},  it follows that
$y_1-v\simeq y_1$ for any $|v|<u$,
and thus
$$
|s(u)-s(-u)| \ls \frac{u}{y_1}.
$$
In the estimate for $J_{2,2}$ in  Section \ref{s4},
%Proposition \ref{p4.1},
we saw that
$s\simeq \log \frac{x_1}{y_1}$, which now means that
$s(u)\simeq s(-u)\simeq\log \frac{x_1}{y_1}$, and $\log \frac{x_1}{y_1} > \log 2$.
 In \eqref{I12-eq3},
we thus have $\tau \simeq \log \frac{x_1}{y_1}$ so that $1-e^{-2\tau}\simeq 1$,
which implies that
\begin{eqnarray*}
\sup_{s(-u)<\tau<s(u)} \lf|\partial_\tau F(\tau,u)\r|
&&\ls t\, {\lf(\log \frac{x_1}{y_1}\r)^{-5/2}}
\exp^\ast\lf(-\frac{t^2}{\log \frac{x_1}{y_1}}\r)\,
\exp^\ast\lf(-u^2-|y^\prime |^2\r).
\end{eqnarray*}
Inserting the last two estimate in \eqref{I12-eq11}, we see that
$$
|F(s(u),u)-F(s(-u),u)|
\ls \frac{t\,u}{y_1}\,{\lf(\log \frac{x_1}{y_1}\r)^{-5/2}}
\exp^\ast\lf(-\frac{t^2}{\log \frac{x_1}{y_1}}\r)\,
\exp^\ast\lf(-u^2-|y'|^2\r),
$$
which combined with \eqref{I22-eq1} implies that
\begin{eqnarray*}
|J_{2,2}|
&&\ls \frac t{x_1\,y_1}
{\lf(\log \frac{x_1}{y_1}\r)^{-5/2}}
\exp^\ast\lf(-\frac{t^2}{\log \frac{x_1}{y_1}}\r)
\exp^\ast\lf(-|y^\prime |^2\r)
\int_{0}^{\frac{y_1}{4}}
u^2
\exp^\ast\lf(-u^2\r)\,du\\
&&
\ls \frac t{x_1\,y_1}
{\lf(\log \frac{x_1}{y_1}\r)^{-5/2}}
\exp^\ast\lf(-\frac{t^2}{\log \frac{x_1}{y_1}}\r)
\exp^\ast\lf(-|y' |^2\r) \notag\\
&&\simeq Z_4(t,x,y).
\end{eqnarray*}

 Theorem \ref{t1.4} is proved.
\hfill$\square$

%%%%%%%%%%%%%%%%%%%%%%%%%%%%%%%%%%%%%%%%%%%%%%%%%%%%%%%%%%%

\section{Sharpness arguments} \label{s6}

We let $K_j(t,x,y), \; j=1,2,3,4$, be as in Theorem \ref{t1.2}.
Let $\rr_+=(0,\infty)$.

\begin{thm}\label{t6.1}
\begin{enumerate}
\item[\rm(a)] The  estimate $ P_t(x, y) \simeq  K_1(t,x,y)$ holds uniformly in
    the set
\begin{eqnarray*}
E_1=&&
\bigg\{(t,x,y)\in \rr_+\times \rr^n\times \rr^n:\\
&&  |x|>1,\;\; x\cdot y>0, \; \;
  t^2|x|<|x|-|y_x| < \frac1{4|x|},\;\; |y_x^\prime |< |x|-|y_x|\,
   \bigg\}.
\end{eqnarray*}
Similarly,  $ P_t(x, y) \simeq  K_2(t,x,y)$ uniformly in
\begin{eqnarray*}
E_{2} =&&
\Bigg\{(t,x,y)\in \rr_+\times \rr^n\times \rr^n:\\
&&  |x|>1,\;\; x\cdot y>0,\;\;  t|x|>1,\; \;
   %&&\hspace{55pt}
t^2|x|<|x|-|y_x| < |x|/4,\;\; |y_x^\prime |< \sqrt{\frac{|x|-|y_x|}{|x|}}\,
\Bigg\},
\end{eqnarray*}
and  $ P_t(x, y) \simeq  K_3(t,x,y)$ uniformly in
\[
E_{3} =
\{(t,x,y)\in \rr_+\times \rn\times \rn :\;\; t>1,\;\; |x|<1, \; \; |y|<1
\,\}.
\]
Finally,  $ P_t(x, y) \simeq   K_4(t,x,y)$ uniformly in
\begin{eqnarray*}
E_{4} =&&
\Big\{(t,x,y)\in \rr_+\times \rn\times \rn :\\ && |x|>e^{16},\; \;
t= \frac{\sqrt{\log|x|}}2 ,  \;\;
   %\hspace{55pt}
|x|^{2/3} \le |y_x| \le |x|^{3/4}, \;\; |y_x^\prime |<1\Big\}.
\end{eqnarray*}
\item[\rm(b)] In the estimate in Theorem \ref{t1.2}, none of the terms
$K_i(t, x,y), \,\: i=1,2,3,4,$
can be suppressed.
\end{enumerate}
\end{thm}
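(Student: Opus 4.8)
# Proof Proposal for Theorem 6.1

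The plan is to prove part (a) first, since part (b) follows almost immediately from it. For part (a), the strategy for each set $E_i$ is to establish a matching lower bound $P_t(x,y) \gtrsim K_i(t,x,y)$ on $E_i$; the upper bound $P_t(x,y) \lesssim K_i(t,x,y)$ on $E_i$ will already follow from Theorem~\ref{t1.2} combined with a direct check that the other three terms $K_j$, $j\neq i$, are dominated by $K_i$ on $E_i$ (for instance, on $E_1$ the constraints $t^2|x| < |x|-|y_x|$ and $|y_x'| < |x|-|y_x| < 1/(4|x|)$ force $K_2, K_4$ to vanish by their characteristic functions once one checks $|y_x| \ge |x|/2$ fails or holds appropriately, and $K_3$ is exponentially small). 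So the real work is the lower bounds, and these come from the integral representation \eqref{poisson-ker-sigma}. For each $E_i$ one isolates exactly the subinterval of integration that produced $K_i$ in the proof of Proposition~\ref{p4.1} — namely $(0,1/2)$-type intervals near where $|y-x+\sigma x|$ is minimized for $K_1$ and $K_2$ (around $\sigma_0 = (x_1-y_1)/x_1$), the interval $[1/2,1)$ for $K_3$, and the interval around $s \simeq \log(x_1/y_1)$ for $K_4$ — and shows that on $E_i$ the integrand there is bounded below by a constant multiple of the corresponding integrand, with the exponential factors $\exp^*(\cdot)$ genuinely of order $1$ because of the size restrictions built into $E_i$. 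After a rotation we may take $x = (x_1, 0, \dots, 0)$ with $x_1 = |x| > 0$ and write $y = (y_1, y')$, so that $y_x$ corresponds to $(y_1, 0)$ and $y_x'$ to $(0,y')$.

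In more detail: for $E_1$, restrict the $\sigma$-integral to $(0, 3\sigma_0/4)$ or a comparable subinterval; there $1-e^{-2s(\sigma)} \simeq \sigma$, and using $|x-y|^2 = \sigma_0^2 x_1^2 + |y'|^2 \simeq \sigma_0^2 x_1^2$ (since $|y_x'| < |x|-|y_x|$ gives $|y'| \lesssim \sigma_0 x_1$), one gets the lower half of the two-sided estimate \eqref{J1}, with $\exp^*(-t(1+|x|)) \simeq 1$ because $t^2|x| < |x|-|y_x| < 1/(4|x|)$ forces $t|x| < 1/2$ and $t < 1$. For $E_2$, restrict to the interval $[\tfrac34\sigma_0, \tfrac54\sigma_0]$ exactly as in \eqref{j12}; the constraint $|y_x'| < \sqrt{\sigma_0}$ together with $t|x|>1$ makes $(t^2+|y'|^2)x_1/(x_1-y_1) \simeq t^2 x_1/\sigma_0 x_1 \cdot \sigma_0 = $ bounded above and below, so the $\exp^*$ factor is of order $1$; matching the explicit computation there yields $P_t \gtrsim K_2$. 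For $E_3$, restrict to $[1/2,1)$ as in \eqref{j2-e2}; since $|x|, |y| < 1$ and $t > 1$, the factor $\min\{1,t\} = 1$ and $\exp^*(-|y|^2) \simeq 1$, so one reads off $P_t \gtrsim K_3$ directly. For $E_4$, restrict to the interval around $s \simeq \log(x_1/y_1)$ as in \eqref{IV}; the hypotheses $|x|^{2/3} \le |y_x| \le |x|^{3/4}$ give $\log(x_1/y_1) \simeq \log|x|$, and with $t = \tfrac12\sqrt{\log|x|}$ we get $t^2/\log(x_1/y_1) \simeq 1$, while $|y'| < 1$ gives $\exp^*(-|y'|^2) \simeq 1$ and $y_1 > |x|^{2/3} > 1$ lets us bound the $\tau$-integral below by $1/y_1$; this reproduces $K_4$.

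Part (b) is then a short deduction: on each $E_i$ we have just shown $P_t(x,y) \simeq K_i(t,x,y)$, and we also need that on $E_i$ (or a suitable subset $\widetilde E_i$) the other three terms are much smaller than $K_i$ — i.e., $K_j(t,x,y)/K_i(t,x,y) \to 0$ along a sequence of points in $E_i$. For most pairs this is immediate from the characteristic functions: e.g. on $E_1$ one can arrange $|y_x| < |x|/2$ so that $\chi_{\{|x|/2 \le |y_x| < |x|\}}$ in $K_2$ vanishes and $\chi_{\{1 < |y_x| < |x|/2\}}$ in $K_4$ may be killed by taking $|y_x| \le 1$; then only $K_3$ competes and it is exponentially small. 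Where both relevant terms are supported, one compares the algebraic/exponential rates directly. If one term could be suppressed, then Theorem~\ref{t1.2} with that term removed would still hold, contradicting $P_t \simeq K_i$ with the other three terms $o(K_i)$ on the appropriate set. I expect the main obstacle to be the bookkeeping for $E_2$ and $E_4$: one must simultaneously verify the lower bound with all $\exp^*$ factors of order $1$ \emph{and} check that the competing $K_j$ are strictly smaller there, which requires carefully choosing sub-families of points within $E_2$ and $E_4$ (the definitions given already encode most of the needed slack, e.g. the $t|x|>1$ and the power-of-$|x|$ constraints, so the delicate point is confirming these slack conditions are exactly the right ones).
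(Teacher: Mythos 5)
Your overall strategy coincides with the paper's: for each $E_i$ you restrict the integral representation \eqref{poisson-ker-sigma} to the subinterval that produced $K_i$ in the proof of Proposition~\ref{p4.1} (namely $J_{1,1}$ for $E_1$, $J_{1,2}$ for $E_2$, $J_2$ for $E_3$, $J_{2,2}$ for $E_4$), verify that the $\exp^*$ factors are $\simeq 1$ under the size constraints built into $E_i$, and then for (b) pass to subsets $\widetilde E_i\subset E_i$ on which the other terms are much smaller. This is exactly what the paper does.

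One specific claim in your sketch is wrong and needs repair. You assert that on $E_1$ one can arrange $|y_x| < |x|/2$ so that the characteristic function $\chi_{\{|x|/2\le |y_x|<|x|\}}$ in $K_2$ vanishes. This is impossible: the defining inequality $|x|-|y_x| < 1/(4|x|)$ together with $|x|>1$ forces $|y_x| > |x| - \tfrac14 > \tfrac34|x|$, so the indicator in $K_2$ is always active on $E_1$. The paper handles this by a direct comparison, not by making $K_2$ vanish: on the subset $\widetilde E_1$ where $t=|x|^{-2}$ and $|x|-|y_x|\simeq |x|^{-2}$, one finds $K_2 \simeq |x|^{3n/2}$ while $K_1 \simeq |x|^{2n}$, so $K_2/K_1 \to 0$ as $|x|\to\infty$. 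Your fallback remark ("Where both relevant terms are supported, one compares the algebraic/exponential rates directly") is exactly the step that is actually needed here, and you would discover this once you tried to make the indicator vanish; but as written the sketch gives the reader the false impression that the characteristic function alone disposes of $K_2$ on $E_1$.

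A second, smaller point: in your $E_2$ discussion you write that $(t^2+|y'|^2)x_1/(x_1-y_1)$ is "bounded above and below." Only the upper bound is needed (and is what the constraints furnish: $t^2 < \sigma_0$ and $|y'|^2 < \sigma_0$ give $(t^2+|y'|^2)/\sigma_0 < 2$); a lower bound is neither available nor required, since the aim is to make the $\exp^*$ factor $\simeq 1$.
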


\begin{proof}
To prove (a),
we  only need to consider
$x=(x_1,0,\dots,0)$ with $x_1\ge0$ and write $y=(y_1,y')$.
We shall use several estimates from the proof of Proposition
\ref{p4.1}. Observe that points of  $E_1$ and
  $E_{2}$ belong to  (ii) of Proposition
\ref{p4.1} and satisfy $t<1/2$.

Assume
 $(t,x,y)\in E_1$.  Then
 $$x_1>1,\;\; \; t^2 x_1<x_1-y_1 < x_1^{-1}/4   % \frac1{4x_1}
 \quad \textup{ and }\quad |y^\prime |< x_1-y_1.$$
Transforming variables in  the integral in \eqref{j11}, we get
\[
J_{1,1} \simeq    \frac t{(t^2+|x-y|^2)^{(n+1)/2}}
\int_0^{B}
\frac1{u^{(n+3)/2}}
\exp^*{\left(-\frac1u\right)}\, du,   %\simeq  \frac t{(t^2+|x-y|^2)^{(n+1)/2}}.
\]
with $B = 3(x_1-y_1)/(4x_1(t^2+|x-y|^2)) $.
One easily verifies that $B^{-1}\ls 1$, so that the value of the integral here
stays away from 0. Since also  $t(1+|x|)\lesssim 1$, it follows
 that $ J_{1,1}\simeq   t/{(t^2+|x-y|^2)^{(n+1)/2}} \simeq  K_1(t,x,y)$.
 Consequently,
 $P_t(x,y)\gs K_1(t,x,y)$ in  $E_1.$

To obtain the converse inequality,
we notice that Proposition \ref{p4.1}(ii) applies, and its proof shows
that $P_t(x,y) \ls
J_{1,1}+ J_{1,2} +  J_{1,3} \ls  K_1(t,x,y) +
J_{1,2} + K_3(t,x,y) $. The inequalities  \eqref{j} now imply that
$J_{1,2} \ls  K_1(t,x,y)$,
since  $x_1/(x_1-y_1) < (x_1-y_1)^{-2} $ in $E_1$. Further,
$$K_3(t,x,y)\simeq t\exp^*{(-|y|^2)} \lesssim t\exp^*{(-|x|^2)}
\lesssim  K_1(t,x,y).$$
We conclude that  $ P_t(x, y) \simeq  K_1(t,x,y)$  in  $E_1$.

Now assume $(t,x,y)\in E_{2}$ so that
$$
x_1>1,\; \; \; tx_1>1,\;\; \;
t^2x_1<x_1-y_1 < x_1/4 \quad \textup{ and }\quad |y^\prime |<
\sqrt{{(x_1-y_1)}/{x_1}}.
$$
Then
 $K_2(t,x,y) \simeq  t x_1^{n/2} (x_1-y_1)^{-(n+2)/2}.$
Since
$x_1(x_1-y_1)>t^2x_1^2>1$,
 a simple scaling shows that  the second
integral in   \eqref{j12} has order of magnitude
$((x_1-y_1)/x_1^3)^{1/2}$.
The  exp* factor preceding it
is essentially $1$, and we conclude  that
$$
 J_{1,2} \simeq  {t} \lf(\frac{x_1} {x_1-y_1}\r)^{\frac{n+3}2}
\lf(\frac {x_1-y_1}{x_1^3}\r)^{\frac{1}2} \simeq  K_2(t,x,y).
$$
Thus $ P_t(x, y) \gtrsim  K_2(t,x,y)$.
  % \frac{x_1-y_1}{x_1} \frac t {x_1}\left( \frac{x_1-y_1}{x_1}\right)^{ -\frac{n+2}2}\simeq K_2(t,x,y).$$
In $E_{2}$ one also has
 $K_1(t,x,y)  \lesssim t/(x_1-y_1)^{n+1}  $ and
$K_3(t,x,y)  \lesssim t\exp^*{(-x_1^2)} $, and these quantities are controlled
by  $K_2(t,x,y).$
 Proposition  \ref{p4.1}(ii) then shows that $ P_t(x, y) \lesssim  K_2(t,x,y)$.
Thus $ P_t(x, y) \simeq  K_2(t,x,y)$  in  $E_{2}$.

 Assume next  that   $(t,x,y)\in E_{3}$ so that  $ K_3(t,x,y) \simeq  1$.
 Now \eqref{j2-e2} is sharp and leads to
$J_2 \simeq  1\simeq  K_3$.  Also,   $K_2(t, x,y) = K_4(t,x,y)=0 $, and
$K_1(t,x,y) \ls t^{-n} \ls  1$.
It follows that   $ P_t(x, y) \simeq  K_3(t,x,y)$  in  $E_{3}$.

Finally let $(t,x,y)\in E_{4}$.
Then the estimate  \eqref{IV} is  sharp since $y_1>1$, and so
$J_{2,2} \simeq  K_4(t,x,y)$. Further, one verifies that
$ K_4(t,x,y) \gs  x_1^{-3/4}(\log  x_1)^{-1} $ and also
 that $K_1(t,x,y)$ and $ K_3(t,x,y)$
 are controlled by $\exp^*{(-x_1)}\ls K_4(t,x,y) $.
It now follows from Proposition \ref{p4.1}(iii) that
$ P_t(x, y) \simeq  K_4(t,x,y)$  in  $E_{4}$.

This completes the arguments for  (a).

We prove (b) by finding for each  $\varepsilon > 0$  and  $i = 1,
2,3,4$  a nonempty subset  $\tilde E_i$  of  $E_i$  in which
$K_j < \varepsilon  P_t$ for  $j \ne i $.
In the proof below, we fix $\varepsilon$ and denote by $C_\varepsilon$
various large positive
 constants which may depend on $\varepsilon$.

Let
\begin{eqnarray*}
\wz E_1=
\left\{(t,x,y)\in E_1:
\;  |x|>C_\varepsilon,\; \;   t=\frac1{|x|^2},\;\;  \frac1{|x|^2}<|x|-|y_x| <
\frac2{|x|^2}
  \right\}.
\end{eqnarray*}
In this set, $P_t(t,x,y) \simeq K_1(t,x,y)\simeq  |x|^{2n}$ but
$K_2(t,x,y) \simeq  |x|^{3n/2}$ and
$K_3(t,x,y)\lesssim 1$, whereas  $K_4(t,x,y)$ vanishes.
A suitable choice of  $C_\varepsilon$ yields the desired inequalities.

In a similar way, we define
 \begin{eqnarray*}
\wz E_{2} =
\left\{(t,x,y)\in E_2:\,
  |x|>C_\varepsilon,\; \;  t=|x|^{-1/2},\;\; 1<|x|-|y_x| <2 \right\},
\end{eqnarray*}
 and it is enough to observe that in this set
 $P_t(t,x,y) \simeq K_2(t,x,y) \simeq  |x|^{(n-1)/2} $,
 but  $K_1(t,x,y)\ls \exp^*{(-|x|^{1/2})}  $
 and   $ K_3(t,x,y) \lesssim\exp^*{(-|x|^2)} $
 and $ K_4(t,x,y) = 0$.

The next set is
 \[
\wz E_{3} =
\left\{(t,x,y)\in E_3:\; t>C_\varepsilon\right\},
\]
in which $P_t(t,x,y) \simeq K_3(t,x,y) \simeq  1 $
but   $K_1(t,x,y)\lesssim t^{-n}$
 and  $K_2(t, x,y) = K_4(t,x,y) = 0$.

Finally,
\begin{eqnarray*}
\wz E_{4} =
\{(t,x,y)\in E_4 :\;  |x|>C_\varepsilon\}.
\end{eqnarray*}
To compare the kernels $K_i(t,x,y)$ on this set, it is enough to consider the
last part of the proof of (a).

This ends the proof of (b) and that of the theorem.
\end{proof}

\medskip

\noindent {\sc Liguang Liu\,}

\noindent  Department of Mathematics,
School of Information\\
Renmin University of China\\
Beijing 100872\\
China

\noindent {\it E-mail}: \texttt{liuliguang@ruc.edu.cn}

\bigskip

\noindent {\sc Peter Sj\"ogren\,}

\noindent  Mathematical Sciences,
University of Gothenburg\\ and\\
Mathematical Sciences, Chalmers\\
SE-412 96  G\"oteborg\\
Sweden

\noindent {\it E-mail}: \texttt{peters@chalmers.se}

\end{document}